\theoremstyle{definition}
\newtheorem{definition}{Definition}[section]
\theoremstyle{definition}
\newtheorem{proposition}{Proposition}[section]
\newtheorem*{proposition*}{Proposition}
\newtheorem{lemma}{Lemma}[section]
\newtheorem{remark}{Remark}[section]
\newtheorem{assumption}{Assumption}[section]
\theoremstyle{corollary}
\Crefname{assumption}{Assumption}{Assumptions}
\newcommand{\dtt}{ \partial_{tt}}
\newcommand{\md}{\mathcal{D}}
\newcommand{\mcu}{\mathcal{U}}
\newcommand{\mcv}{\mathcal{V}}
\newcommand{\mcw}{\mathcal{W}}
\newcommand{\lb}{\bigtriangleup}
\newcommand{\intm}{\int_M}
\newcommand{\intb}{\int_{\partial M}}
\newcommand{\ol}[1]{\overline{#1}} 
\newcommand{\mx}{\mathcal{X}}
\title[The nonlinear wave equation with Wentzell  conditions]{The nonlinear wave equation with nonlinear Wentzell boundary conditions on time-dependent compact Riemannian manifolds.}
\author[A.\ Marta]{Alessio Marta}
\address{Dipartimento di Scienze del Sistema Nervoso e del Comportamento\\ Universit{\`a} degli Studi di Pavia\\ Via Bassi, 21 --  I-27100 Pavia \\ Italy}
\email{alessio.marta@unipv.it}
\begin{document}

\maketitle
\begin{abstract}
We prove a local well-posedness result for an evolution problem consisting of a semilinear wave equation with subcritical nonlinearities posed on a time-dependent compact Riemannian manifold and supplied with a nonlinear dynamical boundary condition of Wentzell type.
\end{abstract}

\section{Introduction and main results}
Let $(M,g_t)$ be a smooth, oriented, compact Riemannian manifold  with boundary $\partial M$ and suppose that $g_t$ is a time-dependent Riemannian metric defined for $t \in [0,T]$, $T>0$, satisfying the following hypothesis.
\begin{assumption}\label{hypothesis:smooth_metric}
The family of Riemannian metrics $\{g_t\}_{t \in [0,T]}$ depends smoothly on $t$. 
\end{assumption}
Let $\gamma_- : H^{s}(M) \rightarrow H^{s-1/2}(\partial M)$, $s \in \mathbb{R}$, be the Lion-Magenes trace \cite{LionMagenes1,LionMagenes2}. Notice that the definition of this trace is time-independent because $M$ is compact and therefore the Sobolev space $H^s(M)$ does not depend on the choice of the Riemannian metric. Let $\nu$ be the outward-pointing unit normal of $\partial M$ and define the Neumann trace map  
$$\gamma_+ := \gamma_- \circ \partial_\nu : H^{s}(M) \rightarrow H^{s-3/2}(\partial M).$$ For $t \in [0,T]$, $t \in \mathbb{R}_+$ we consider the semilinear problem
\begin{equation}\label{eq:non_linear_problem}
\begin{cases}
\dtt u - \lb (t) u - m(t,p) u  = \mathcal{N}(t,p,u)\ \textit{in} \ M^\circ \\
\dtt v  - \lb_b (t) v - m_b(t,q) v - \rho = \mathcal{N}_b(t,q,v) \ \textit{on} \ \partial M \\
v = \gamma_- u, \quad \rho = - \gamma_+ u
\end{cases}
\end{equation}
where $\lb (t)$ and $\lb_b (t)$ are the Laplace-Beltrami operators built out of the metrics $g_t$ and $h_t=g_t|_{\partial M}$ respectively, $p \in M$, $q \in \partial M$ and $\mathcal{N}$, $\mathcal{N}_b$ are nonlinear terms. The functions $m$ and $m_b$ satisfy the following hypotheses.
\begin{assumption}\label{hyp:mass}
$m \in \mathcal{C}^1([0,T] \times M)$ and $m_b \in \mathcal{C}^1 ([0,T] \times \partial M)$ are non-positive functions.
\end{assumption}
\begin{remark}
This assumption is needed to prove a well-posedness result for the non-autonomous linear equation without recurring to perturbative methods.
\end{remark}
Our assumptions on the nonlinearities are the following.
\begin{assumption}\label{hyp:nonlinearities_1}
The nonlinear terms $\mathcal{N}$ and $\mathcal{N}_b$ seen as functions from $([0,T] \times M) \times \mathbb{R}$ and $([0,T] \times \partial M) \times \mathbb{R}$ to $\mathbb{C}$ are Carath\'eodory functions satisfying the following conditions:
\begin{itemize}
\item[a)] For every $u,v \in \mathbb{R}$ and $(t,p) \in [0,T] \times M$ and $(t,q) \in [0,T] \times \partial M$ there are $C,C_b \geq 0$ and $\alpha,\beta \geq 1$ such that 
\begin{equation}\label{eq:non_linearities_form}
\begin{split}
|\mathcal{N}(t,p,u)| \leq C ( 1 + |u|^{\alpha} )\\
|\mathcal{N}_b(t,q,v)| \leq C_b ( 1 + |v|^{\beta})
\end{split}
\end{equation}
\item[b)]  For every $u_1,u_2,v_1,v_2 \in \mathbb{R}$ and $(t,p) \in [0,T] \times M$ and $(t,q) \in [0,T] \times \partial M$ there are $K,K_b \geq 0$ and $\alpha,\beta \geq 1$ such that 
\begin{equation*}
|\mathcal{N}(t,p,u_1)-\mathcal{N}(t,p,u_2)| \leq K |u_1-u_2| \left( 1 + |u_1|^{\alpha-1} + |u_2|^{\alpha-1} \right)
\end{equation*}
\begin{equation*}
|\mathcal{N}_b(t,p,v_1)-\mathcal{N}_b(t,p,v_2)| \leq K_b |v_1-v_2| \left( 1 + |v_1|^{\beta-1} + |v_2|^{\beta-1} \right)
\end{equation*}
\end{itemize}
\end{assumption}
\begin{remark}
The previous hypothesis is satisfied by power type non-linearities of the form
\begin{equation*}
\begin{split}
\mathcal{N}(t,x,y) & = P(t,x) |y|^{\alpha-1}y\\
\mathcal{N}_b(t,x,z) & = P_b(t,x) |z|^{\beta-1}z
\end{split}
\end{equation*}
with $P \in \mathcal{C}^0([0,T]\times M)$, $P_b \in \mathcal{C}^0([0,T]\times \partial M)$ and $\alpha,\beta$ real positive numbers.
\end{remark}
Let $\mathsf{C_{M^\circ}}$ and $\mathsf{C_{\partial M}}$ be the critical exponents of the Sobolev embeddings for $H^1(M)$ and $H^1(\partial M)$, namely
\begin{equation}\label{eq:critical_exponents}
\mathsf{C_{M^\circ}} =
\begin{cases}
\dfrac{2n}{n-2} \ & \textit{if} \ n \geq 3\\
\infty \ & \textit{if} \ n = 2
\end{cases}
\quad
\mathsf{C_{\partial M}} =
\begin{cases}
\dfrac{2n-2}{n-3} \ & \textit{if} \ n \geq 4\\
\infty \ & \textit{if} \ n = 2,3
\end{cases}
\end{equation}
Then we assume the following hypothesis on the exponents $\alpha,\beta$ appearing in the nonlinear terms.
\begin{assumption}\label{hyp:nonlinearities_2}
The exponents $\alpha,\beta$ in \Cref{eq:non_linearities_form} satisfy
$$1 \leq \alpha \leq \dfrac{\mathsf{C_{M^\circ}}}{2}$$
$$1 \leq \beta \leq \dfrac{\mathsf{C_{\partial M}}}{2}$$
In the case $\mathsf{C_{M^\circ}}=\infty$ or $\mathsf{C_{\partial M}}=\infty$ we understand the above inequalities as $\alpha \geq 1$  and $\beta \geq 1$.
\end{assumption}
\begin{remark}
Note that our hypotheses in the case $n=3$ are encompassing a cubic interaction in the interior, a relevant case in quantum field theory \cite{GlimmJaffe,Peskin}.
\end{remark}

The problem in \Cref{eq:non_linear_problem} can be seen as a system of PDEs describing a physical system in which a field in the interior and one on the boundary have different dynamics and the two theories are coupled by means of the term $\rho$, with the field in the interior being a source for the boundary one. These kind of PDEs appear in quantum field theory on curved spacetimes \cite{Zahn17,DFJ18,DDF18,DM20,JW20,JW21,DJM22} -- in general with a time-dependent metric -- and in physical models where the momentum of the boundary cannot be neglected, for example in acoustic flow problems \cite{LL98,Li21}. Often in problems from classical physics $-m$ and $-m_b$ are both constant and represent the bulk and the boundary mass respectively. However, in quantum field theory on curved spacetimes these two linear terms are in general non-constant \cite{DFJ18,DDF18}.\\

Well-posedness results, resolvent estimates, asymptotics and stability of the solutions of similar problems have been treated by several authors with different techniques and regularity hypotheses \cite{Gal04,Coclite14,Vitillaro16,DDF18,Csobo2019,LiChan21,Vanspranghe20,Mehdi22,Vanspranghe22} in the case of time independent Laplace-Beltrami operators. The 1-D non-autonomous wave equation with Wentzell boundary conditions has been studied in \cite{Favini05}. In quantum field theory the linear version of problem \eqref{eq:non_linear_problem} was studied in the anti-de Sitter spacetime in \cite{DFJ18} and in Minkowski spacetime in \cite{Zahn17}.\\

Our aim is to establish a local well-posedness result for the non-au\-tonomous problem given in \Cref{eq:non_linear_problem}. We accomplish this goal making use of a technique already employed to treat the autonomous problem in \cite{Vitillaro16,Vanspranghe20,Vanspranghe22}, namely  recasting \Cref{eq:non_linear_problem} as a first order evolution equation of the form
\begin{equation}\label{eq:evolution_formulation_simplified}
\dfrac{dX}{dt}(t)  = A(t)X(t) + \mathcal{N}(t,X)
\end{equation}
with $X=(u,\partial_t u,v,\partial_t v)$ in a certain subspace $\mx$ of $L^2(M^\circ) \times L^2(\partial M) \times H^1(M^\circ) \times H^1(\partial M)$ that we shall introduce in \Cref{subsec:evolution_formulation}. In this expression $A$ is an operator expressing the linear part of the equation and $\mathcal{N}$ is a term containing the nonlinearities. We begin studying the linearized version of \Cref{eq:non_linear_problem} with a time-independent metric, proving that for every $s \in [0,T]$ the operator $A(s)$ is the generator of a $\mathcal{C}^0$ contraction semigroup. This result allows us to proceed with the study of the non-autonomous case, considering a time-dependent metric. First, we obtain a global well-posedness result for the non-autonomous linear problem, which can be roughly stated as follows.
\begin{proposition*}[\Cref{prop:low_regularity_non_auto_well_posedness}]
Let $F \in \mathcal{C}^1([0,T],\mx)$ and suppose that $X(0) \in \md (A)$. Then the linear non-autonomous problem
\begin{equation*}
\begin{cases}
\dot{X} = A(t) X(t) + F(t), \quad t \in [0,T]\\
X(0) = X_0
\end{cases}
\end{equation*}
admits a unique solution $X(t) \in \mathcal{C}^1([0,T],\mx) \cap \mathcal{C}^0([0,T],\mathcal{D}(A))$. Furthermore, if $X_0 \in \mx$ the problem admits a unique mild solution $X \in \mathcal{C}^0([0,T],\mx)$ such that
\begin{equation*}
\| X(t) \|_\mx \leq \|  X(0) \|_\mx + t \max_{s \in [0,T]} \| F(s) \|_\mx. 
\end{equation*}
\end{proposition*}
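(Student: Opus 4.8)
The plan is to solve the abstract Cauchy problem by constructing a two‑parameter evolution system $\{U(t,s)\}_{0\le s\le t\le T}$ generated by the family $\{A(t)\}_{t\in[0,T]}$ and then representing the solution through the variation‑of‑constants (Duhamel) formula. This is the classical strategy for non‑autonomous linear problems of hyperbolic type, in the sense of Kato, and I would follow the treatment found in the standard references on hyperbolic evolution systems. The input already available from the excerpt is that each $A(s)$ generates a $\mathcal{C}^0$ contraction semigroup on $\mx$. To put Kato's machinery in motion I would verify three hypotheses: (i) the family $\{A(t)\}$ is \emph{stable}; (ii) the domain $\md(A(t))=\md(A)$ is independent of $t$; and (iii) the map $t\mapsto A(t)$ is continuously differentiable from $[0,T]$ into $\mathcal{L}\!\left(\md(A),\mx\right)$, where $\md(A)$ carries the graph norm. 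Conditions (ii) and (iii) follow from the structure of the Wentzell realisation together with \Cref{hypothesis:smooth_metric}: since $M$ is compact the underlying Sobolev spaces are fixed as sets, the Wentzell boundary conditions themselves do not depend on $g_t$, and the coefficients of $A(t)$ — the Laplace--Beltrami operators $\lb(t),\lb_b(t)$ and the mass terms $m,m_b$ — depend smoothly on $t$.

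The delicate point, and the step I expect to be the main obstacle, is the stability condition (i). The inner product on $\mx$ that renders $A(t)$ dissipative is the energy inner product built out of $g_t$ (its $H^1$‑part uses $\lb(t)$, and \Cref{hyp:mass} guarantees that the non‑positive masses contribute non‑negatively to the energy, so this is a genuine, $\mx$‑equivalent norm). Because the metric varies, these energy inner products are not identical but only equivalent, so I would use the compactness of $M$ and the smooth dependence in \Cref{hypothesis:smooth_metric} to show that they are \emph{uniformly} equivalent on $[0,T]$ and to control the rate at which $\langle\cdot,\cdot\rangle_t$ varies in $t$. Combined with the contractivity of each $e^{\tau A(t)}$ in its own energy norm, this is what should yield Kato's uniform resolvent bounds with the sharp stability constants $M=1$, $\omega=0$; isolating the dissipative structure responsible for $\omega=0$ (rather than a merely exponential bound) is precisely where the non‑positivity of the masses and the dissipative boundary coupling must be used, and it is this sharp form of stability that ultimately produces the clean constant in the asserted estimate.

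Once (i)--(iii) are in place, Kato's theorem produces a unique evolution system $\{U(t,s)\}$ that is strongly continuous, leaves $\md(A)$ invariant, and satisfies $\dt U(t,s)x=A(t)U(t,s)x$ and $\partial_s U(t,s)x=-U(t,s)A(s)x$ for $x\in\md(A)$. I would then set
\[
X(t)=U(t,0)X_0+\int_0^t U(t,\sigma)F(\sigma)\,d\sigma .
\]
For $X_0\in\md(A)$ and $F\in\mathcal{C}^1([0,T],\mx)$, the hypothesis $F\in\mathcal{C}^1$ is exactly what is needed to differentiate the Duhamel term — after an integration by parts transferring the time derivative onto $F$ — so that $X$ solves the equation classically and $X\in\mathcal{C}^1([0,T],\mx)\cap\mathcal{C}^0([0,T],\md(A))$; uniqueness follows by differentiating $\sigma\mapsto U(t,\sigma)X(\sigma)$, which forces any solution to coincide with the above representation. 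For merely $X_0\in\mx$ the same formula defines the mild solution in $\mathcal{C}^0([0,T],\mx)$. Finally, the quantitative bound is immediate from the contraction estimate $\|U(t,s)\|\le 1$ supplied by the $(M,\omega)=(1,0)$ stability:
\[
\| X(t) \|_{\mx}
\le \| U(t,0)X_0 \|_{\mx}+\int_0^t \| U(t,\sigma)F(\sigma) \|_{\mx}\,d\sigma
\le \| X(0) \|_{\mx}+t\max_{s\in[0,T]}\| F(s) \|_{\mx},
\]
which is the claimed inequality.
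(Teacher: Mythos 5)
Your proposal is correct and lands on the same underlying machinery as the paper---Kato's theory of hyperbolic non-autonomous evolution equations, construction of the evolution family $U(t,s)$, and the Duhamel representation---but it verifies a different set of sufficient conditions. The paper invokes the original 1953 form of Kato's theorem (\Cref{prop:well_posedness_hyperbolic_problems}), whose hypotheses C2--C5 are phrased in terms of the operator $B(t,s)=(\mathbb{I}+A(t))(\mathbb{I}+A(s))^{-1}$: uniform boundedness, bounded variation in $t$, and weak continuity/differentiability. These are checked by splitting $B(t,s)$ as in \Cref{eq:operator_b_decomposition} into a $t$-independent piece (the off-diagonal $-1$ entries and the trace $\gamma_+$, bounded on $\md(A)$) and a piece carrying $\lb(t)+m(t)$ and $\lb_b(t)+m_b(t)$, whose smooth $t$-dependence follows from \Cref{hypothesis:smooth_metric} and \Cref{hyp:mass}. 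You instead verify the later ``stability $+$ constant domain $+$ $\mathcal{C}^1$ dependence of $t\mapsto A(t)$ in $\mathcal{L}(\md(A),\mx)$'' formulation; the two condition sets are close cousins and your verification of (ii)--(iii) uses exactly the same structural facts (compactness of $M$, time-independence of $\md(A)$, smoothness of the metric). Where your write-up is arguably more careful than the paper is on the stability constant: since the energy inner product on $\mx$ is built from $g_t$ and the masses, each $A(t)$ is dissipative only in its own time-$t$ norm, and obtaining $\|U(t,s)\|\le 1$ (hence the clean constant in the final estimate) requires the uniform equivalence and controlled variation of these norms that you flag explicitly; the paper passes over this, stating only that the bound is ``an immediate consequence of Duhamel's principle.'' You should, however, actually carry out that uniform-equivalence estimate rather than leave it as an expectation, since it is the one point where the argument is not purely formal.
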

Then we focus on the non-linear case. Making use of a fixed-point argument we establish the following existence and uniqueness for the non-autonomous nonlinear problem.
\begin{proposition*}[\Cref{prop:nonlinear_well_posedness_conditional}]
Assume that Hypotheses \eqref{hypothesis:smooth_metric}, \eqref{hyp:nonlinearities_1} and \eqref{hyp:nonlinearities_2} hold true. Let $\rho > 0$. Then there is $\tau = \tau(\rho)$ such that for every $X_0 \in \mx$ with $\| X_0 \|_\mx \leq \rho$, the semilinear problem as per \Cref{eq:evolution_formulation_simplified} admits a unique mild solution $X \in C^0([0,\tau],\mx)$.
\end{proposition*}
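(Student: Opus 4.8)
The plan is to reformulate \eqref{eq:evolution_formulation_simplified} as a fixed-point problem solved on a short time interval by the contraction mapping principle, using \Cref{prop:low_regularity_non_auto_well_posedness} to invert the linear part. Let $U(t,s)$ denote the contraction evolution family on $\mx$ associated with $\{A(t)\}$, whose existence and bound $\|U(t,s)\|_{\mathcal{L}(\mx)}\le 1$ underlie the estimate of \Cref{prop:low_regularity_non_auto_well_posedness}. For a radius $R>0$ and a time $\tau\in(0,T]$ to be chosen, I would work in the closed ball $B_R=\{Y\in \mathcal{C}^0([0,\tau],\mx):\sup_{t}\|Y(t)\|_\mx\le R\}$ and define
\[
(\Phi Y)(t)=U(t,0)X_0+\int_0^t U(t,s)\,\mathcal{N}(s,Y(s))\,ds .
\]
Since $Y\in\mathcal{C}^0$ and $\mathcal{N}$ is Carath\'eodory, $s\mapsto \mathcal{N}(s,Y(s))$ is Bochner measurable with values in $\mx$; the bounds below make it essentially bounded, so the integral is well defined and, by strong continuity of $U(t,s)$, $\Phi Y\in\mathcal{C}^0([0,\tau],\mx)$. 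A fixed point of $\Phi$ is exactly a mild solution of \eqref{eq:evolution_formulation_simplified}.

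The heart of the argument is to control the Nemytskii (superposition) operator induced by $\mathcal{N}$. Writing $(u,v)$ for the $H^1$-components of an element of $\mx$, the nonlinearity enters the velocity ($L^2$) slots, so $\|\mathcal{N}(t,Y)\|_\mx$ is comparable to $\|\mathcal{N}(t,\cdot,u)\|_{L^2(M)}+\|\mathcal{N}_b(t,\cdot,v)\|_{L^2(\partial M)}$. Using Assumption \eqref{hyp:nonlinearities_1}a),
\[
\|\mathcal{N}(t,\cdot,u)\|_{L^2(M)}^2\le 2C^2\bigl(|M|+\|u\|_{L^{2\alpha}(M)}^{2\alpha}\bigr),
\]
and since Assumption \eqref{hyp:nonlinearities_2} gives $2\alpha\le \mathsf{C_{M^\circ}}$, the subcritical Sobolev embedding $H^1(M)\hookrightarrow L^{2\alpha}(M)$ yields $\|\mathcal{N}(t,\cdot,u)\|_{L^2(M)}\le C'(1+\|u\|_{H^1(M)}^{\alpha})$; the boundary term is identical with $2\beta\le \mathsf{C_{\partial M}}$ and $H^1(\partial M)\hookrightarrow L^{2\beta}(\partial M)$. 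Hence $\|\mathcal{N}(t,Y)\|_\mx\le C_0(1+\|Y\|_\mx^{\alpha}+\|Y\|_\mx^{\beta})$. For the Lipschitz bound I would apply Assumption \eqref{hyp:nonlinearities_1}b) together with H\"older's inequality with the conjugate pair $(\alpha,\tfrac{\alpha}{\alpha-1})$, which places every factor back into $L^{2\alpha}$ and hence, by the same embedding, into $H^1$; this gives, on $B_R$,
\[
\|\mathcal{N}(t,Y_1)-\mathcal{N}(t,Y_2)\|_\mx\le L(R)\,\|Y_1-Y_2\|_\mx,\qquad L(R)=K_0\bigl(1+R^{\alpha-1}+R^{\beta-1}\bigr).
\]
Because $M$ is compact and $\{g_t\}$ depends smoothly on $t$ (Assumption \eqref{hypothesis:smooth_metric}), the $L^2$- and $H^1$-norms for different $t$ are uniformly equivalent and the embedding constants can be chosen independent of $t$, so $C_0$ and $L(R)$ are uniform in $t\in[0,T]$.

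With these estimates the contraction scheme closes in the usual way. Fix $\|X_0\|_\mx\le\rho$ and set $R=2\rho$. Using $\|U(t,s)\|_{\mathcal{L}(\mx)}\le1$ (equivalently the estimate of \Cref{prop:low_regularity_non_auto_well_posedness} applied via linearity), for $Y\in B_R$ one has $\|(\Phi Y)(t)\|_\mx\le\rho+\tau\,C_0(1+R^{\alpha}+R^{\beta})$, so choosing $\tau$ so small that $\tau\,C_0(1+R^{\alpha}+R^{\beta})\le\rho$ forces $\Phi(B_R)\subseteq B_R$. Likewise, the difference of two images solves the linear problem with zero initial datum and forcing $\mathcal{N}(\cdot,Y_1)-\mathcal{N}(\cdot,Y_2)$, whence $\|(\Phi Y_1)(t)-(\Phi Y_2)(t)\|_\mx\le \tau\,L(R)\sup_s\|Y_1(s)-Y_2(s)\|_\mx$; shrinking $\tau$ further so that $\tau\,L(R)<\tfrac12$ makes $\Phi$ a contraction of $B_R$. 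The Banach fixed-point theorem then produces a unique fixed point $X\in B_R$, i.e.\ the desired mild solution, with $\tau=\tau(\rho)$ depending on $\rho$ only through $R=2\rho$ and the uniform constants. Uniqueness in the full space $\mathcal{C}^0([0,\tau],\mx)$, not merely in $B_R$, follows by a Gr\"onwall argument from the local Lipschitz bound.

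I expect the main obstacle to be the Nemytskii estimates of the second paragraph rather than the fixed-point bookkeeping, which is routine once the contraction property $\|U(t,s)\|\le1$ is granted. The subtlety is twofold: the estimates only close because of the subcriticality encoded in Assumption \eqref{hyp:nonlinearities_2} — one genuinely needs $2\alpha\le\mathsf{C_{M^\circ}}$ and $2\beta\le\mathsf{C_{\partial M}}$ for the superposition operators to map $H^1\to L^2$ and be locally Lipschitz — and the boundary estimate is the more delicate one, since $\mathsf{C_{\partial M}}$ differs from $\mathsf{C_{M^\circ}}$ and one must use the intrinsic embedding of $H^1(\partial M)$ for the independent boundary field $v$ rather than a trace inequality. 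Care is also needed to keep all constants uniform in $t$, which is where the compactness of $M$ and the smooth dependence of $g_t$ in Assumption \eqref{hypothesis:smooth_metric} are used.
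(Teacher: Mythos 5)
Your proposal is correct and follows essentially the same route as the paper: Duhamel's formula with the evolution family from the linear non-autonomous theory, local Lipschitz estimates for the superposition operators via H\"older's inequality and the subcritical Sobolev embeddings (which the paper isolates as \Cref{lemma:nonlinear_lipschitz}), and the Banach fixed-point theorem on a closed ball of $\mathcal{C}^0([0,\tau],\mx)$. The only minor differences are that the paper tracks the evolution-family bound as $M_0=\sup_{t,s}\|U(t,s)\|$ rather than assuming $\|U(t,s)\|\le 1$, and it defers uniqueness outside the ball to a separate gluing-and-shifting argument (\Cref{lemma:uniqueness_unconditional}) instead of your Gr\"onwall remark.
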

From this conditional existence and uniqueness proposition we are then able to prove the following local well-posedness result.
\begin{proposition*}[\Cref{prop:nonlinear_well_posedness_unconditional}]
Assume Hypotheses \eqref{hypothesis:smooth_metric}, \eqref{hyp:nonlinearities_1} and \eqref{hyp:nonlinearities_2}. Then the following assertions hold true:
\begin{itemize}
\item[a)] For each $u_0 \in X$ there is a maximal mild solution $X(t) \in \mathcal{C}^0(I,X)$ with $I$ either $[0,T]$ or $[0,t^+(X_0))$, where $t^+(X_0) \in [\tau,T]$.
\item[b)] If $t^+(X_0) < T$, then $\lim_{t\rightarrow t^+(X_0)^-} \|X(t) \|=+\infty$.
\item[c)] For any $t^\star \in (0,t^+(X_0))$ there is a radius $\rho=\rho(X_0,t^\star)$ such that the map
$$ \overline{B(X_0,\rho)} \rightarrow \mathcal{C}^0([0,b],X), \quad X_0 \mapsto X(t)$$
is Lipschitz continuous.
\end{itemize}
\end{proposition*}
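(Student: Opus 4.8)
The plan is to promote the uniform-time local existence of \Cref{prop:nonlinear_well_posedness_conditional} to a maximal solution by a standard continuation argument, the decisive feature being that the existence time $\tau$ there depends only on a bound $\rho$ for the initial datum and not on the datum itself. I would first record the mild formulation $X(t) = U(t,0)X_0 + \int_0^t U(t,s)\,\mathcal{N}(s,X(s))\,ds$, where $U(t,s)$ is the linear propagator furnished by \Cref{prop:low_regularity_non_auto_well_posedness}, and observe that the contraction-type estimate for the linear non-autonomous problem yields a bound on $\|U(t,s)\|$ that is uniform for $0 \leq s \leq t \leq T$. Restarting the fixed-point scheme from an arbitrary initial time $s_0 \in [0,T]$ rather than from $0$ then reproduces the conclusion of the conditional proposition on $[s_0, s_0+\tau(\rho)]$ with the \emph{same} $\tau(\rho)$; this restartable, uniform-in-$s_0$ version is the engine for all three assertions.

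For part a) I would set $t^+(X_0) := \sup\{ b \in (0,T] : \text{a mild solution exists on } [0,b]\}$. The conditional proposition applied with $\rho = \|X_0\|_\mx$ shows this set is nonempty and gives $t^+(X_0) \geq \tau(\|X_0\|_\mx) \geq \tau$. Uniqueness from \Cref{prop:nonlinear_well_posedness_conditional} guarantees that two mild solutions defined on overlapping intervals coincide on the overlap, so the solutions on the various $[0,b]$ glue into a single $X \in \mathcal{C}^0(I,\mx)$ with $I = [0,t^+(X_0))$, which closes up to $[0,T]$ precisely when the solution extends to the endpoint. Part b) is then a contradiction argument: if $t^+(X_0) < T$ but the conclusion failed, there would be $R$ and a sequence $t_n \uparrow t^+(X_0)$ with $\|X(t_n)\|_\mx \leq R$; choosing $n$ so large that $t_n > t^+(X_0) - \tau(R)$ and restarting from the datum $X(t_n)$ produces a mild solution on $[t_n, t_n + \tau(R)]$ with $t_n + \tau(R) > t^+(X_0)$, which by uniqueness prolongs $X$ strictly beyond $t^+(X_0)$ and contradicts maximality. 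Hence $\lim_{t \to t^+(X_0)^-}\|X(t)\|_\mx = +\infty$.

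For part c) I would fix $t^\star \in (0,t^+(X_0))$, use that $X$ is continuous on the compact interval $[0,t^\star]$ to set $M := \sup_{[0,t^\star]}\|X(t)\|_\mx < \infty$, and work inside the ball of radius $2M$. The fixed-point construction already delivers a local Lipschitz estimate: on any subinterval of length $b_0 := \tau(2M)$ the solution map from initial data to $\mathcal{C}^0$-trajectory is Lipschitz, the constant coming from the uniform propagator bound together with the local Lipschitz bound for $\mathcal{N}$ in Assumption \eqref{hyp:nonlinearities_1}b) and a Grönwall argument on the difference of two mild solutions. I would then partition $[0,t^\star]$ into finitely many subintervals of length at most $b_0$ and compose the local estimates, so that the overall Lipschitz constant compounds to a finite quantity of the form $e^{L t^\star}$. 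Choosing the radius $\rho = \rho(X_0,t^\star)$ small enough that every perturbed datum in $\overline{B(X_0,\rho)}$ produces a solution remaining within the ball of radius $2M$ on all of $[0,t^\star]$ (an openness/continuity step justified by the same Grönwall bound) makes this iteration legitimate and yields the claimed Lipschitz continuity.

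I expect the main obstacle to be not the abstract continuation scheme, which is classical for semilinear evolution equations, but the bookkeeping forced by the \emph{non-autonomous} structure: one must verify that the local existence time $\tau$ and the propagator norm can be chosen uniformly over the starting time $s_0 \in [0,T]$, so that the restart arguments in b) and c) use a single positive $\tau$. This uniformity rests on the compactness of $[0,T]$ and the smooth dependence of the metrics in \Cref{hypothesis:smooth_metric}, which keep the constants in \Cref{prop:low_regularity_non_auto_well_posedness} bounded, but it is the point that has to be checked with care before the blow-up alternative and the Lipschitz estimate can be asserted.
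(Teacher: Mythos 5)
Your proposal follows essentially the same route as the paper: the engine is the conditional local existence result with an existence time $\tau(\rho)$ depending only on the norm bound, combined with restarting the fixed-point scheme from an arbitrary initial time (the paper's \Cref{remark:shifted_evolution}), gluing, the blow-up alternative by contradiction along a bounded sequence $t_n \uparrow t^+(X_0)$, and iterating the local Lipschitz estimate over a finite partition of $[0,t^\star]$ for part c). The one place where you are too quick is the assertion that ``uniqueness from \Cref{prop:nonlinear_well_posedness_conditional} guarantees that two mild solutions on overlapping intervals coincide on the overlap'': that proposition only gives uniqueness among trajectories confined to the ball $B_{\tau,\rho}$, and the paper devotes \Cref{lemma:glue_and_shift} and \Cref{lemma:uniqueness_unconditional} precisely to upgrading this conditional uniqueness to unconditional uniqueness of mild solutions (via a supremum-of-agreement-times argument and the shifted evolution family); you should make that upgrade explicit before invoking agreement on overlaps in parts a) and b).
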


The paper is organized as follows. In \Cref{sec:analytic_preliminaries} we introduce some basic facts on semigroups and evolution equations. In \Cref{sec:linear_boundary_value_problem} we set the framework we use in the rest of the paper, writing \Cref{eq:non_linear_problem} as a first order evolution equation and introducing all the relevant functional spaces. \Cref{sec:time_independent_metric} is devoted to the study of the linearized problem in the case of a time-independent metric. In \Cref{sec:time_dependent_metric} we treat the case of a time-dependent metric. First we consider the linearized non-autonomous problem, showing that it is hyperbolic in the sense of evolution semigroups, then we finally focus on the nonlinear problem, obtaining the sought local well-posedness result.

\newpage

\section{Analytic preliminaries}\label{sec:analytic_preliminaries}
In this section we present some basic facts on semigroups and evolution equations. The main goal of this section is to give the precise definitions and the theorems we will employ in the remainder of the work, since in the vast literature on autonomous and non-autonomous evolution equations some definitions are not understood in the same sense. 
\subsection{Autonomous evolution problems}
\subsubsection{Strongly continous semigroup}
Let $\mx$ be a Banach space and consider a linear operator $A : D(A) \subset \mx \rightarrow \mx$. Consider the initial value problem
\begin{equation}\label{eq:acp}
\begin{cases}
\dot{X} = A X(t), \quad t \geq 0\\
X(0) = X_0 
\end{cases}
\end{equation}
for $X_0 \in \mx$.
\begin{definition}
A function $X : \mathbb{R}_0^+ \rightarrow \mx$ is called a classical solution to \Cref{eq:acp} if $X \in \mathcal{C}^1(\mathbb{R}_0^+;\mx)$, $X(t) \in D(A)$ and \Cref{eq:acp} is satisfied for every $t \geq 0$.
\end{definition}
The connection between one parameter semigroups and evolution equations is given by the following proposition.
\begin{proposition}[\cite{KlausRainer99}, Proposition 6.2]
Let $(A,D(A))$ be the generator of a strongly continuous semigroup $(T(t))_{t \geq 0}$. Then, for every $X_0 \in D(A)$, the function $X : t \mapsto X(t):=T(t)X_0$ is the unique classical solution of \Cref{eq:acp}.
\end{proposition}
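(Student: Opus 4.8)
The plan is to split the statement into its two natural halves: \emph{existence} of the classical solution, which follows directly from the basic differentiability properties of a strongly continuous semigroup on the domain of its generator, and \emph{uniqueness}, which I would obtain by a standard backward-orbit argument. Throughout I would take for granted only the defining properties of a $\mathcal{C}^0$ semigroup $(T(t))_{t \geq 0}$ and of its generator $(A,D(A))$, namely the semigroup law, strong continuity at $t=0$, and the characterization $Ax = \lim_{h \downarrow 0} h^{-1}(T(h)x - x)$ for $x \in D(A)$.

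For existence, the key fact I would first establish (or cite) is the fundamental regularity lemma for orbits starting in $D(A)$: if $X_0 \in D(A)$ then $T(t)X_0 \in D(A)$ for all $t \geq 0$, the orbit map $t \mapsto T(t)X_0$ is continuously differentiable on $\mathbb{R}_0^+$, and
\[
\frac{d}{dt} T(t)X_0 = A\,T(t)X_0 = T(t)\,A X_0.
\]
The two equalities come from pushing the difference quotient $h^{-1}(T(h)-\mathrm{Id})$ either to the left or to the right of $T(t)$ and using that $T(t)$ is bounded and commutes with the limit; continuity of the derivative then follows from strong continuity applied to the fixed vector $AX_0$. With this in hand I would simply verify the three clauses of the definition of a classical solution for $X(t) := T(t)X_0$: membership $X(t) \in D(A)$ is the first assertion of the lemma; the regularity $X \in \mathcal{C}^1(\mathbb{R}_0^+;\mx)$ is its second; and the identity $\dot X(t) = A X(t)$ together with $X(0) = T(0)X_0 = X_0$ is its third. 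Hence $X$ solves \Cref{eq:acp}.

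For uniqueness, suppose $Y$ is any classical solution of \Cref{eq:acp} and fix an arbitrary $t > 0$. I would consider the auxiliary $\mx$-valued function $w(s) := T(t-s)\,Y(s)$ for $s \in [0,t]$. Since $Y(s) \in D(A)$, since $Y$ is $\mathcal{C}^1$, and since orbits through $D(A)$ are differentiable in the semigroup parameter, the product rule in the Banach space $\mx$ gives
\[
w'(s) = -A\,T(t-s)\,Y(s) + T(t-s)\,\dot Y(s) = -T(t-s)\,A Y(s) + T(t-s)\,A Y(s) = 0,
\]
where I used the commutation $A\,T(t-s)Y(s) = T(t-s)\,AY(s)$ from the existence lemma and the equation $\dot Y(s) = A Y(s)$. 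Thus $w$ is constant on $[0,t]$, and comparing $w(0) = T(t)X_0$ with $w(t) = Y(t)$ yields $Y(t) = T(t)X_0 = X(t)$; since $t$ was arbitrary, $Y \equiv X$.

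The main obstacle is not conceptual but a matter of rigor: the whole argument rests on the fundamental differentiability lemma, so the real work is justifying that the orbit map is genuinely $\mathcal{C}^1$ (not merely right-differentiable) and that the derivative can be moved through the bounded operator $T(t)$ to yield the two-sided identity $\frac{d}{dt}T(t)X_0 = AT(t)X_0 = T(t)AX_0$. In the uniqueness step the delicate point is the validity of the product rule for $s \mapsto T(t-s)Y(s)$, where one factor is differentiated in the semigroup parameter and the other as a $D(A)$-valued curve; I would handle this by writing the difference quotient of $w$ explicitly, splitting it into the two contributions, and passing to the limit using the joint continuity of $(s,x) \mapsto T(s)x$ on compact sets together with the $\mathcal{C}^1$ regularity of $Y$.
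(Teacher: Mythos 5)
Your proof is correct and is exactly the standard argument from the cited reference (Engel--Nagel, Prop.~6.2): existence via the differentiability lemma $\frac{d}{dt}T(t)X_0 = AT(t)X_0 = T(t)AX_0$ for $X_0 \in D(A)$, and uniqueness via the constancy of $s \mapsto T(t-s)Y(s)$. The paper itself gives no proof of this proposition, simply quoting it from the literature, so there is nothing to contrast beyond noting that your argument is the one the cited source uses.
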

A classical solution exists if and only if the initial value $X_0$ belongs to $D(A)$. However, the integral formulation of \Cref{eq:acp} may have sense also for initial data $X_0$ belonging to the Banach space $\mx$.
\begin{definition}
A continuous function $X : \mathbb{R}^+ \rightarrow \mx$ is called a mild solution to the Cauchy problem in \Cref{eq:acp} if $\int_0^t X(s) ds \in D(A)$ for every $t > 0$ and if the conditions
$$ X(t) = A\int_0^t X(s) ds  + X_0 $$
holds true $\forall t > 0$.
\end{definition}
\begin{remark}
A classical solution of the problem is also a mild one.
\end{remark}
As for classical solutions if $A$ is the generator of a strongly continuous semigroup $T(t)$, we can build the mild solutions of the evolution problems by means of $T(t)$.
\begin{proposition}[\cite{KlausRainer99}, Proposition 6.4]
Let $(A,D(A))$ be the generator of a strongly continuous semigroup $(T(t))_{t \geq 0}$. Then, for every $X \in \mx$, the orbit map $X : t \mapsto X(t):=T(t)X_0$ is the unique mild solution of \Cref{eq:acp}.
\end{proposition}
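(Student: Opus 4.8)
The plan is to establish the two halves of the assertion separately: that the orbit map $X(t):=T(t)X_0$ is a mild solution (existence), and that it is the only one (uniqueness). Throughout I take $X_0 \in \mx$ and freely use the defining properties of a strongly continuous semigroup together with the elementary calculus of Bochner integrals of continuous $\mx$-valued functions.

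For existence, the single fact I need is the fundamental integral identity
$$A \int_0^t T(s) X_0\, ds = T(t) X_0 - X_0, \qquad X_0 \in \mx,$$
which in particular exhibits $\int_0^t T(s)X_0\,ds \in D(A)$. I would prove it by applying the difference quotient of the semigroup to the integral: for $h>0$, the semigroup law $T(h)T(s)=T(s+h)$ and a change of variables give
$$\frac{T(h)-I}{h}\int_0^t T(s)X_0\,ds = \frac1h\int_t^{t+h} T(s)X_0\,ds - \frac1h\int_0^h T(s)X_0\,ds.$$
Since $s \mapsto T(s)X_0$ is continuous, both averages on the right converge as $h \to 0^+$, to $T(t)X_0$ and $X_0$ respectively; hence the left-hand side converges, which simultaneously shows the integral lies in $D(A)$ and yields the identity. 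Rearranging gives exactly $X(t) = A\int_0^t X(s)\,ds + X_0$, and continuity of $t \mapsto T(t)X_0$ is immediate from strong continuity, so the orbit map is a mild solution.

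For uniqueness, let $X$ be any mild solution and set $W(t) = \int_0^t X(s)\,ds$. Because $X$ is continuous, $W \in \mathcal{C}^1$ with $W'(t)=X(t)$; by hypothesis $W(t)\in D(A)$ and $AW(t) = X(t)-X_0$. I would then differentiate $s \mapsto T(t-s)W(s)$ on $[0,t]$. Using that the semigroup commutes with $A$ on $D(A)$ and that $\tfrac{d}{ds}T(t-s)y = -T(t-s)Ay$ for $y \in D(A)$, a product-rule computation gives
$$\frac{d}{ds}\,T(t-s)W(s) = T(t-s)\bigl(W'(s)-AW(s)\bigr) = T(t-s)X_0.$$
Integrating from $0$ to $t$ and using $W(0)=0$ yields $W(t) = \int_0^t T(s)X_0\,ds$; differentiating in $t$ (again legitimate since $s\mapsto T(s)X_0$ is continuous) gives $X(t)=T(t)X_0$. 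This pins down the mild solution uniquely and, incidentally, reproves existence.

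I expect the only delicate points to be bookkeeping ones rather than conceptual: justifying the termwise differentiation of $T(t-s)W(s)$ (which needs $W(s)\in D(A)$ and the commutation of the semigroup with its generator, both available here) and the differentiation of Bochner integrals of continuous integrands. An equally clean alternative for uniqueness is to note that, for the difference $Z$ of two mild solutions, the map $t\mapsto \int_0^t Z(s)\,ds$ is a classical solution of the homogeneous problem with zero initial datum; it therefore vanishes by the uniqueness of classical solutions already recorded above, forcing $Z\equiv 0$.
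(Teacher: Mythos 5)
Your argument is correct and complete: the paper itself states this proposition as a cited result from Engel--Nagel without giving a proof, and what you have written is precisely the standard textbook argument (the difference-quotient identity $A\int_0^t T(s)X_0\,ds = T(t)X_0 - X_0$ for existence, and either the differentiation of $s\mapsto T(t-s)W(s)$ or the reduction of $t\mapsto\int_0^t Z(s)\,ds$ to a classical solution of the homogeneous problem for uniqueness). The only points requiring care --- local uniform boundedness of the semigroup when passing to the limit in the product rule, and continuity of $s\mapsto AW(s)=X(s)-X_0$ so that the fundamental theorem of calculus applies --- are exactly the ones you flag, and both are available here.
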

The generalization to inhomogeneous problems is straightforward \cite{Sch20}. 
\begin{definition}
Let $A$ generate a strongly continuous semigroup $(T(t))_{t \geq 0}$ and let $F: [0,T] \rightarrow \mx$ such that $\int_0^\delta \|F(s)\|_\mx ds \leq \infty$ for every $\delta \in (0,T]$. A classical solution $X(t)$ of the inhomogenous problem
\begin{equation}\label{eq:inhomogeneous_problem}
\begin{cases}
\dot{X} = A X(t) + F(t), \quad t \in [0,T]\\
X(0) = X_0
\end{cases}
\end{equation}
is a function $X(t)\in C^1((0,T],\mx) \cap C^0([0,T],\mx)$, given by Duhamel's formula
\begin{equation}\label{eq:acpsol_source}
X(t) = T(t)X_0 + \int_0^t T(t-s)F(s) ds
\end{equation}
\end{definition}
\begin{remark}
In some works a classical solution $X(t)\in C^1([0,T], \mathcal{D}(A))$ is called strict\footnote{In literature there are slightly different definitions of classical and strict solutions in the case of inhomogeneous and nonlinear problems. Here we stick to the defintions given in \cite{Sch20}}.
\end{remark}
To guarantee the existence and the uniqueness of the aforementioned solution, we need some additional hypotheses of the source term $F$.
\begin{proposition}[\cite{Sch20}, Theorem 2.9]\label{prop:inhomogeneous_evolution_well_posedness_1}
Let $(A,D(A))$ be the generator of a strongly continuous semigroup $(T(t))_{t \geq 0}$. Let $X_0 \in \mathcal{D}(A)$. Assume that either $F \in \mathcal{C}^1([0,T],\mx)$ or that $F \in \mathcal{C}^0([0,T],\mathcal{D}(A))$. Then the function $X$ given by \Cref{eq:acpsol_source} is the unique classical solution of the inhomogeneous problem \eqref{eq:inhomogeneous_problem} on $[0,T]$. Furthermore, if the initial datum $X_0 \in \mx$ the solution $u \in C^0([0,T],\mx)$ given by \Cref{eq:acpsol_source} is the unique mild solution of the problem. 
\end{proposition}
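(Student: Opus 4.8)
The plan is to show that the Duhamel expression \eqref{eq:acpsol_source} is simultaneously the unique classical solution (for $X_0\in\md(A)$, under either regularity hypothesis on $F$) and, after relaxation, the unique mild solution (for $X_0\in\mx$). I would begin with uniqueness, which also explains why \eqref{eq:acpsol_source} is the only candidate. Given any classical solution $X$ of \eqref{eq:inhomogeneous_problem}, fix $t>0$ and consider the auxiliary map $s\mapsto T(t-s)X(s)$ on $[0,t]$. Since $X\in\mathcal{C}^1$ takes values in $\md(A)$ and $\tfrac{d}{ds}T(t-s)w=-AT(t-s)w$ for $w\in\md(A)$, the product rule together with the commutation $AT(t-s)w=T(t-s)Aw$ on $\md(A)$ gives
$$\frac{d}{ds}\,T(t-s)X(s)=-AT(t-s)X(s)+T(t-s)\bigl(AX(s)+F(s)\bigr)=T(t-s)F(s).$$
Integrating from $0$ to $t$ collapses the left-hand side to $X(t)-T(t)X_0$ and reproduces \eqref{eq:acpsol_source}; hence a classical solution is unique and only existence remains. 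Writing $X(t)=T(t)X_0+v(t)$ with $v(t):=\int_0^t T(t-s)F(s)\,ds$, and recalling that $t\mapsto T(t)X_0$ is already the classical solution of the homogeneous problem \eqref{eq:acp} when $X_0\in\md(A)$, the task reduces to showing that $v$ is $\mathcal{C}^1$, $\md(A)$-valued, and satisfies $\dot v=Av+F$ with $v(0)=0$.

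For the differentiability of $v$ I would first record the difference-quotient identity obtained from the semigroup law and the splitting $\int_0^{t+h}=\int_0^t+\int_t^{t+h}$, namely
$$\frac{v(t+h)-v(t)}{h}=\frac{T(h)-I}{h}\,v(t)+\frac{1}{h}\int_t^{t+h}T(t+h-s)F(s)\,ds.$$
By strong continuity of $(T(t))_{t\geq 0}$ and continuity of $F$, the last term tends to $F(t)$ as $h\to 0^+$, so the whole right derivative exists precisely when $v(t)\in\md(A)$, in which case the first term tends to $Av(t)$ and we obtain $\dot v(t)=Av(t)+F(t)$. Thus the crux of the argument, and the step I expect to be the main obstacle, is establishing the membership $v(t)\in\md(A)$; this is exactly where the two alternative hypotheses on $F$ enter, and where the proof genuinely bifurcates.

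Under $F\in\mathcal{C}^0([0,T],\md(A))$ the argument is soft: the map $s\mapsto AT(t-s)F(s)=T(t-s)AF(s)$ is continuous, hence Bochner-integrable, and since $A$ is closed it commutes with the integral, yielding $v(t)\in\md(A)$ with $Av(t)=\int_0^t T(t-s)AF(s)\,ds$, a continuous function of $t$. Under $F\in\mathcal{C}^1([0,T],\mx)$ I would instead change variables, $v(t)=\int_0^t T(\sigma)F(t-\sigma)\,d\sigma$, and differentiate directly; the $\mathcal{C}^1$-regularity of $F$ lets the difference quotient pass under the integral and gives
$$\dot v(t)=T(t)F(0)+\int_0^t T(\sigma)\,\dot F(t-\sigma)\,d\sigma,$$
a continuous function of $t$. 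Comparing this with the identity above forces $\tfrac{T(h)-I}{h}v(t)$ to converge as $h\to 0^+$, which by the very definition of the generator means $v(t)\in\md(A)$ and $Av(t)=\dot v(t)-F(t)$. In both cases the right derivative of $v$ exists and is continuous, so $v\in\mathcal{C}^1$, and $X=T(\cdot)X_0+v$ is the sought classical solution.

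Finally, for $X_0\in\mx$ the homogeneous part $T(t)X_0$ is merely continuous, so \eqref{eq:acpsol_source} still defines a continuous $X$ but in general not a classical solution; to identify it as the mild solution I would verify the integrated identity $X(t)=X_0+A\int_0^t X(s)\,ds+\int_0^t F(s)\,ds$ by integrating the Duhamel formula, interchanging the order of integration (Fubini for the Bochner integral) and using the elementary semigroup relation $\int_0^\tau T(\sigma)w\,d\sigma\in\md(A)$ with $A\int_0^\tau T(\sigma)w\,d\sigma=T(\tau)w-w$. Uniqueness of the mild solution is then immediate from linearity: the difference of two mild solutions solves the integrated identity with $X_0=0$ and $F\equiv 0$, hence coincides with $T(\cdot)0\equiv 0$. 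I expect the only delicate point in this whole scheme to be the $\mathcal{C}^1$ case of the previous paragraph, the rest being soft semigroup bookkeeping.
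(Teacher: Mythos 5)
Your proof is correct. Note, however, that the paper offers no proof of this proposition to compare against: it is quoted verbatim from the cited reference ([Sch20], Theorem 2.9) as a known result, and your argument is precisely the classical one found there and in Pazy — uniqueness via differentiating $s\mapsto T(t-s)X(s)$, existence by showing the convolution term $v(t)=\int_0^t T(t-s)F(s)\,ds$ lies in $\mathcal{D}(A)$ (by closedness of $A$ in the $\mathcal{C}^0([0,T],\mathcal{D}(A))$ case, by differentiating after the change of variables in the $\mathcal{C}^1$ case), and the integrated identity plus linearity for the mild statement. The only step you assert without comment, that a continuous function with continuous right derivative is $\mathcal{C}^1$, is a standard lemma valid for Banach-space-valued functions, so nothing is missing.
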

Assuming additional regularity on the source term, we can improve both the spatial and the temporal regularity of the solution.
\begin{proposition}[\cite{Can18}, Theorems 16,17]\label{prop:inhomogeneous_evolution_well_posedness_2}
Let $X_0 \in \mathcal{D}(A)$ and assume that $F$ is either in $L^2([0,T],\mx) \cap \mathcal{C}^0((0,T],\mx)$ or $H^1([0,T],\mx)$. Then the mild solution $X$ of \Cref{prop:inhomogeneous_evolution_well_posedness_1} is a strict solution, namely $X \in \mathcal{C}^1([0,T],\mx) \cap \mathcal{C}^0([0,T],\mathcal{D}(A))$.
\end{proposition}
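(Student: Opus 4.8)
The plan is to isolate the inhomogeneous part of Duhamel's formula and to improve its regularity by a direct differentiation argument. Writing the mild solution provided by \Cref{prop:inhomogeneous_evolution_well_posedness_1} as $X(t) = T(t)X_0 + w(t)$ with
$$w(t) = \int_0^t T(t-s)F(s)\,ds,$$
the homogeneous part $t \mapsto T(t)X_0$ is already a classical solution whenever $X_0 \in \mathcal{D}(A)$, by \cite[Proposition 6.2]{KlausRainer99}; in particular it lies in $\mathcal{C}^1([0,T],\mx) \cap \mathcal{C}^0([0,T],\mathcal{D}(A))$. Hence everything reduces to proving that the convolution term $w$ enjoys the same regularity, and the argument splits according to the two hypotheses on $F$.

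First I would treat the case $F \in H^1([0,T],\mx)$. Since in one (temporal) dimension $H^1([0,T],\mx) \hookrightarrow \mathcal{C}^0([0,T],\mx)$ and every such $F$ is absolutely continuous with $F' \in L^2([0,T],\mx) \subset L^1([0,T],\mx)$, after the change of variable $w(t) = \int_0^t T(\sigma)F(t-\sigma)\,d\sigma$ one may differentiate in $t$ to obtain
$$w'(t) = T(t)F(0) + \int_0^t T(t-s)F'(s)\,ds,$$
which is continuous in $t$ by strong continuity of $(T(t))_{t\ge0}$ and integrability of $F'$. To show $w(t) \in \mathcal{D}(A)$ I would use the telescoping identity
$$\frac{T(h)-I}{h}\,w(t) = \frac{w(t+h)-w(t)}{h} - \frac1h\int_t^{t+h} T(t+h-s)F(s)\,ds,$$
whose right-hand side converges, as $h \to 0^+$, to $w'(t) - F(t)$ (the first term by the computation above, the second to $F(t)$ by continuity of $F$ and $T(0)=I$). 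Since the limit exists, $w(t) \in \mathcal{D}(A)$ with $A w(t) = w'(t) - F(t)$; this last expression is continuous, so $w \in \mathcal{C}^1([0,T],\mx) \cap \mathcal{C}^0([0,T],\mathcal{D}(A))$ and the claim follows in this case.

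The case $F \in L^2([0,T],\mx) \cap \mathcal{C}^0((0,T],\mx)$ is where the main difficulty lies, because the differentiation step above is no longer available: $F$ need not be weakly differentiable, so $w'$ cannot be expressed through $F'$. The natural route is an approximation argument: choose $F_n \in \mathcal{C}^1([0,T],\mx)$ with $F_n \to F$ in $L^2([0,T],\mx)$ and locally uniformly on $(0,T]$, solve the regularized problems by the first case to obtain strict solutions $w_n$, and pass to the limit. Convergence $w_n \to w$ in $\mathcal{C}^0([0,T],\mx)$ is immediate from Duhamel's formula and the uniform bound of the semigroup on the compact interval $[0,T]$ together with the $L^2$-convergence of $F_n$. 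The delicate point—and the crux of the proof—is to upgrade this to convergence in the graph norm, that is, to control $A w_n = w_n' - F_n$ uniformly in $n$ and then invoke the closedness of $A$ to identify the limit of $A w_n$ with $A w$. Here the continuity of $F$ on $(0,T]$ is what recovers the boundary contribution $F(t)$ in the difference quotient, while the $L^2$ bound tames the convolution near $s=0$; closing this estimate uniformly in $n$, without any smoothing of the semigroup itself, is the step I expect to require the most care.
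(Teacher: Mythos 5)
The paper offers no proof of this proposition---it is quoted directly from \cite{Can18}---so there is nothing internal to compare you against; I can only assess the attempt on its own terms. Your treatment of the case $F \in H^1([0,T],\mx)$ is correct and is the standard argument: the change of variables, the formula $w'(t)=T(t)F(0)+\int_0^t T(t-s)F'(s)\,ds$, and the difference-quotient identity yielding $w(t)\in\mathcal{D}(A)$ with $Aw(t)=w'(t)-F(t)$ are all fine, using that $H^1([0,T],\mx)\hookrightarrow \mathcal{C}^0([0,T],\mx)$.

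The gap is in the other case, and it is not a technicality you have merely postponed: the approximation scheme you sketch cannot be closed, because with $F$ merely $\mx$-valued the statement is false for a general $\mathcal{C}^0$ semigroup. Take the translation semigroup on $\mx=L^2(\mathbb{R})$, so $\mathcal{D}(A)=H^1(\mathbb{R})$, pick $x\in L^2(\mathbb{R})\setminus H^1(\mathbb{R})$, and set $X_0=0$, $F(s)=T(s)x$. Then $F\in\mathcal{C}^0([0,T],\mx)\subset L^2([0,T],\mx)\cap\mathcal{C}^0((0,T],\mx)$, yet the mild solution is $w(t)=\int_0^t T(t-s)T(s)x\,ds=tT(t)x$, which lies outside $\mathcal{D}(A)$ and is not differentiable for $t>0$. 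This is precisely why no uniform graph-norm bound on $Aw_n$ is available: none exists at this level of generality. The first alternative in the hypothesis must be read with spatial rather than temporal regularity, namely $F\in L^2([0,T],\mathcal{D}(A))\cap\mathcal{C}^0((0,T],\mx)$, which is the $L^2$ refinement of the alternative $F\in\mathcal{C}^0([0,T],\mathcal{D}(A))$ appearing in \Cref{prop:inhomogeneous_evolution_well_posedness_1}; as printed, with $\mx$ in place of $\mathcal{D}(A)$, the proposition misquotes \cite{Can18}. Under that reading the correct argument is direct, not an approximation: since $A$ is closed and $s\mapsto AF(s)$ is Bochner integrable, $A$ passes under the integral sign, so $w(t)\in\mathcal{D}(A)$ with $Aw(t)=\int_0^t T(t-s)AF(s)\,ds$, which is continuous on $[0,T]$; the identity $w'(t)=Aw(t)+F(t)$ then follows from the difference-quotient computation you already wrote, and the claimed regularity of $X$ is immediate. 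Note finally that the paper only ever invokes this proposition through \Cref{prop:evolution_well_posedness} with $\mathcal{F}\in\mathcal{C}^1([0,T],\mx)\subset H^1([0,T],\mx)$, so only the branch you did prove is actually used.
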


\subsection{Non-autonomous evolution problems}\label{sec:Non-autonomous_evolution_problems}
We now turn our attention of non-autonomous problems, following \cite{Scha02Evo,Kato53}.
First, we focus on non-autonomous homogeneous problems of the form
\begin{equation}\label{eq:non_autonomous_homogeneous}
\begin{cases}
\dot{X} = A(t)X(t) + F(t)\\
X(0) = X_0, \quad t \in [0,T]
\end{cases}
\end{equation}
assuming that $\mathcal{D}(A(t))= \mathcal{D}(A(0))$ for every $t \in [0,T]$, $T \in \mathbb{R}$, is dense in $\mx$. Since the domain of $A(t)$ does not depend on time, we simply write $\mathcal{D}(A)$ to denote the domain of $A$ at a certain time. Before stating a well-posedness result, we need to introduce the notion of evolution family.
\begin{definition}
Let $J \subseteq \mathbb{R}$ be a closed interval. A family of bounded operators $U(t,s)_{t \geq s}: \mx \rightarrow \mx$ with $s,t \in J$ is an evolution family if the following conditions are satisfied:
\begin{itemize}
\item[a)] $U(t,r)U(r,s) = U(t,s)$ for every $t\geq r \geq s$, $t,r,s \in J$.
\item[b)] For every $s \in J$ $U(s,s) = \mathbb{I}$.
\item[c)] The map $(t,s) \mapsto U(t,s)$  is strongly continuous.
\end{itemize}
\end{definition}
We say that $U(t,0)$ solves a well-posed non autonomous problem if given an initial datum $X_0$, then we have that
\begin{equation*}
X(t) = U(t,s)X_0 + \int_0^t U(t,\tau)f(\tau) d\tau
\end{equation*} 
for every $t \in [0,T]$. As for autonomous equation we say that the solution is strict if $X(t) \in \mathcal{C}^1([0,T],\mathcal{D}(A))$ and mild if $X(t) \in \mathcal{C}^0((0,T],\mx)$. In the case the domain of the operators $A(t)$ is constant in time -- in this case we simply write $\mathcal{D}(A)$ instead of $\mathcal{D}(A(t))$ -- the following proposition holds true \cite{Kato53}.
\begin{proposition}
\label{prop:well_posedness_hyperbolic_problems}
Assume that the following conditions hold true:
\begin{itemize}
\item[C1)] For every $t \in [0,T]$ the operator $A(t)$ is the generator of a $\mathcal{C}^0$ semigroup.
\item[C2)] The operator $B(s,t)=[\mathbb{I}+A(t)] \cdot [\mathbb{I}+A(s)]^{-1}$ is a uniformly bounded operator for every $t,s \in [0,T]$.
\item[C3)] $B(t,s)$ is of bounded variation in $t$, namely for evert $s$ there is $N \geq 0$ such that for every partition $0=t_0<t_1<\ldots<t_n=T$ of the interval $[0,T]$ we have:
\begin{equation*}
\sum_{j=1}^n \| B(t_j,s)-B(t_{j-1},s) \|_{\mathcal{B}(\mx)} \leq N.
\end{equation*}
\item[C4)] $B(t,s)$ is weakly continuous in $t$ at least for some $s$.
\item[C5)] $B(t,s)$ is weakly differentiable in $t$ and the operator $\partial B(t,s) /\partial t$ is strongly continuous in $t$ at least for some $s$.
\end{itemize}
Then there is an evolution family $U(t,s)$ defined $0 \leq s \leq t \leq T$ such that $U(t,s) \mathcal{D}(A) \subset \mathcal{D}(A)$ and the problem in \Cref{eq:non_autonomous_homogeneous} admits a unique strict solution $X(t) \in \mathcal{C}^1([0,T],\mx) \cap \mathcal{C}^0([0,T],\mathcal{D}(A))$ for every $X_0 \in \mathcal{D}(A)$ and $F(t) \in \mathcal{C}^0([0,T],\mathcal{D}(A))$. Furthermore, if $X_0 \in \mx$ and $F(t) \in \mathcal{C}^0([0,T],\mx)$, \Cref{eq:non_autonomous_homogeneous} admits a unique mild solution $X(t) \in \mathcal{C}^0([0,T],\mx)$.
\end{proposition}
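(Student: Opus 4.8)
The plan is to follow Kato's approximation scheme for hyperbolic non-autonomous problems, using conditions C1)--C5) exactly where the classical construction requires them. Since by C1) each $A(t)$ generates a $\mathcal{C}^0$ semigroup (a contraction semigroup in the situation relevant here, by \Cref{sec:time_independent_metric}), I would first build \emph{approximate} evolution families by freezing the time variable. Fix a partition $0=t_0<t_1<\dots<t_n=T$ and, on each subinterval, propagate with the autonomous semigroup generated by the left endpoint value of $A$; concatenating these frozen semigroups produces an operator $U_n(t,s)$ for $0\le s\le t\le T$. Each $U_n$ automatically satisfies the cocycle identity a) and the normalisation b) and is strongly continuous, so the entire problem reduces to passing to the limit as the mesh $\max_k(t_k-t_{k-1})\to 0$.

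The first quantitative ingredient is a uniform stability bound $\|U_n(t,s)\|_{\mathcal{B}(\mx)}\le M$ independent of the partition, with $M=1$ in the contraction case at hand, so that the limit operators will be well defined on all of $\mx$. The second ingredient is a strong Cauchy estimate for $(U_n)$ on $\mathcal{D}(A)$. The standard telescoping identity expresses the difference $U_n(t,s)x-U_m(t,s)x$ as the integral over $\sigma$ of terms $U_m(t,\sigma)\,[A_n(\sigma)-A_m(\sigma)]\,U_n(\sigma,s)x$, where $A_n$ denotes the piecewise-constant frozen generator. Writing the increment of frozen generators through $B(\sigma,t_k)-\mathbb{I}$ applied to $(\mathbb{I}+A(t_k))x$, the total size of these increments is controlled by the bounded-variation hypothesis C3), while the weak continuity C4) and weak differentiability C5) of $B(t,s)$ in its first slot upgrade this control to genuine convergence of the integrals. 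This yields a strong limit $U(t,s):=\lim_n U_n(t,s)$ on the dense set $\mathcal{D}(A)$, extended to $\mx$ by the uniform bound; passing to the limit in a), b), c) shows that $U(t,s)$ is an evolution family.

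The heart of the matter, and the step I expect to be the main obstacle, is to show that $U(t,s)$ preserves $\mathcal{D}(A)$ and that, for $X_0\in\mathcal{D}(A)$, the orbit $t\mapsto U(t,s)X_0$ is continuously differentiable with $\partial_t U(t,s)X_0=A(t)U(t,s)X_0$. This is precisely where C2)--C5) do their work. The uniform boundedness of $B(s,t)=[\mathbb{I}+A(t)][\mathbb{I}+A(s)]^{-1}$ guarantees that the conjugated operators $[\mathbb{I}+A(t)]\,U(t,s)\,[\mathbb{I}+A(s)]^{-1}$ remain bounded, whence $U(t,s)\mathcal{D}(A)\subset\mathcal{D}(A)$; the bounded variation and weak regularity of $B(t,s)$ then allow one to differentiate the limit under the concatenation and to identify the derivative. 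Here care is needed because only \emph{weak} differentiability of $B$ is assumed, so the identification of $\partial_t U$ is first obtained in a weak sense and then promoted to a strong statement using the strong continuity of $\partial B/\partial t$ from C5) together with the uniform bounds.

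Finally I would incorporate the source term through Duhamel's formula $X(t)=U(t,0)X_0+\int_0^t U(t,\tau)F(\tau)\,d\tau$. For $X_0\in\mathcal{D}(A)$ and $F\in\mathcal{C}^0([0,T],\mathcal{D}(A))$, the invariance $U(t,\tau)\mathcal{D}(A)\subset\mathcal{D}(A)$ and the differentiability just established make $X$ a strict solution in $\mathcal{C}^1([0,T],\mx)\cap\mathcal{C}^0([0,T],\mathcal{D}(A))$, by the same computation that underlies the autonomous Duhamel argument behind \Cref{prop:inhomogeneous_evolution_well_posedness_1}; for merely $X_0\in\mx$ and $F\in\mathcal{C}^0([0,T],\mx)$ the same formula defines the mild solution, which is continuous by the strong continuity of $U$. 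Uniqueness in both cases follows by the standard argument: if $X$ solves the homogeneous equation with $X(0)=0$, then for fixed $t$ the map $\tau\mapsto U(t,\tau)X(\tau)$ has vanishing $\tau$-derivative on $[0,t]$, so $X(t)=U(t,0)X(0)=0$, the differentiability needed for this computation being exactly the invariance and differentiability of $U$ on $\mathcal{D}(A)$ obtained above.
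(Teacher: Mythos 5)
The paper does not actually prove this proposition: it is stated as a quoted result and attributed to Kato's 1953 paper \cite{Kato53}, so there is no in-text argument to compare against. Your sketch is a faithful outline of Kato's original product-integration proof (frozen-coefficient approximants $U_n$, a uniform stability bound, a telescoping Cauchy estimate on $\mathcal{D}(A)$ controlled by the bounded variation of $B(t,s)$ via C3--C5, invariance of $\mathcal{D}(A)$ and differentiability via C2, then Duhamel for the source term); the one point worth making explicit is that C1 alone does not yield the uniform bound $\|U_n(t,s)\|\leq M$ for products of the frozen semigroups, and you correctly cover this by invoking the contraction property that holds in the application, which is exactly the quasi-contractivity hypothesis under which Kato's theorem is actually proved.
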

\begin{remark}
The first requirement of condition C2) is automatically satisfied when the domain of $A(t)$ is independent of $t$ by \cite[Lemma 2]{Kato53}
\end{remark}

The interested reader can find other details on non-autonomous evolution equations in \cite{Kato53,Kato61,Tanabe79,Tanabe97,Pazy83,Fattorini83,Amann95,Lunardi95,NN02,Scha02Evo}.

\section{The linear boundary value problem}\label{sec:linear_boundary_value_problem}

\subsection{The linear problem}

We start studying the linear problem
\begin{equation}\label{eq:linear_problem}
\begin{cases}
  \ \dtt u - \lb u - m u = F \ \textit{in} \ M^\circ \\
  \ \dtt v - \lb_b v - m_b u - \rho = G \ \textit{on} \ \partial M \\
v = \gamma_-(u), \quad \rho = - \gamma_+ u
\end{cases}
\end{equation}
with $F \in \mathcal{C}^1([0,T],L^2(M^\circ))$  and 
$G \in \mathcal{C}^1([0,T],L^2(\partial M))$.
\begin{remark}
Notice that we can consider different sources for the interior and the boundary. When we will treat the nonlinear boundary value problem this will allow us to consider different nonlinearities for the interior and for the boundary equations, a situation of interest, e.g., for the physical models realizing the holographic principle \cite{Witten98,Bousso02} in which the equation governing the dynamics of the field on the boundary has a nonlinear term, while the one in the interior has not.
\end{remark}

\subsection{Evolution equation formulation of the problem}\label{subsec:evolution_formulation}
Following \cite{Vitillaro16,Vanspranghe20,Vanspranghe22} we introduce the space $\mx = \mcv \times \mcu$, where 
\begin{equation}
\mcu = L^2(M^\circ) \times L^2(\partial M)
\end{equation}
and
\begin{equation}
\mcv = \left\{ (u,v) \in  H^1(M^\circ) \times H^1(\partial M) \ | \ \gamma_- u = v \right\}
\end{equation}
\begin{remark}
Since $M$ is a compact manifold, we do not need to specify which metric $g_t$ we are using in the definition of $\mcu$ and $\mcv$, as every choice gives rise to the same Sobolev space. In the following, however, the choice of the metric $g_{\hat{t}}$ at time $\hat{t}$ is understood. For example the $L^2$ norm of the function $u(\hat{t}) : M \rightarrow \mathbb{C}$ is $\int_M |u(\hat{t})|^2 dg_{\hat{t}}$.
\end{remark}
The Sobolev space $\mcv$ is closed in $H^1(M^\circ) \times H^1(\partial M)$ and is a complex Hilbert space considering the inner product 
\begin{align}\label{eq:inner_product_v}
\left( 
\begin{pmatrix}
a_1 \\
a_2
\end{pmatrix}
,
\begin{pmatrix}
b_1 \\
b_2
\end{pmatrix}
\right)_\mcv = \int_M   g(\nabla a_1, \nabla \ol{b_1}) dg + \int_{\partial M}   h(\nabla a_2, \nabla \ol{b_2}) dh \\ - \intm m a_1 \ol{b_1} dg - \intb m_b a_2 \ol{b_2} dh
\end{align}
Then $\mx$ is also a Sobolev space with its natural inner product.
\begin{remark}\label{remark:equivalent_norms}
The assumption \eqref{hyp:mass} entails that the norm induced in $\mx$ by the inner products of $\mathcal{U}$ and $\mathcal{V}$ is equivalent to the usual norm of $H^1(M^\circ) \times H^1(\partial M)$. However, the last two terms, missing in the inner product chosen in \cite{Vitillaro16,Vanspranghe20,Vanspranghe22}, play a fundamental role to deal with the terms $mu$ and $m_b u$ in \Cref{eq:non_linear_problem} in the proof of the well-posedness.
\end{remark}
At last we introduce
\begin{equation}
\mcw = \left( H^2(M^\circ) \times H^2(\partial M) \right) \cap \mcv
\end{equation}
which is a dense subspace of $L^2(M^\circ) \times L^2(\partial M)$ \cite{Vitillaro16,Vanspranghe20,Vanspranghe22}.
Setting $v = \gamma_- u$, $w =  -   \partial_t u$ and $z = -   \partial_t v$, the problem stated in \Cref{eq:linear_problem} can be recasted as follows.
\begin{equation}\label{eq:evolution_linear}
\dfrac{d}{dt}
\begin{pmatrix}
u \\ v \\ w \\ z
\end{pmatrix}
+ A(t)
\begin{pmatrix}
u \\ v \\ w \\ z
\end{pmatrix}
=
\begin{pmatrix}
0 \\ 0 \\ F \\ G
\end{pmatrix}
\end{equation}
where $A(t):\mcw \times \mcv \rightarrow \mx$ is defined by
\begin{equation}\label{eq:operator_A}
\begin{pmatrix}
0 & 0 & -1 & 0 \\
0 & 0 & 0 & -1 \\
-\lb-m & 0 & 0 & 0\\
\gamma_+ & -\lb_b -m_b & 0 & 0 
\end{pmatrix}
\end{equation}
with $\lb,m,\lb_b,m_b$ depending on time. We stress that, as a consequence of the fact that we are working on a compact manifold with a smooth metric, $\mathcal{D}(A)$ does not depend on time. 
\section{The linear autonomous equation}\label{sec:time_independent_metric}
The first step to prove the well-posedness of the evolution problem is to study the operator $A(t)$ at a fixed time $t$, a case which encompasses the situation where a fixed time-independent metric $g_t = g \quad \forall t \in [0,T]$ is considered. In this is the case the operator $A$ does not depend on time and in this section we simply write $A$ instead of $A(t)$. Our analysis will also yield the well-posedness of the time-independent massive linear problem, treating the massive terms without resorting to perturbation theory. To prove that $A$ is the generator of a $\mathcal{C}^0$ semigroup, we follow \cite{Liu07,Vanspranghe22}, adapting the proof to the case at hand. First, we show that the operator $ A+\lambda\mathbb{I}:\mathcal{D}(A) \rightarrow \mx$ is surjective. 
\begin{lemma}\label{lemma:A_minus_lambda_surjective}
Let $A$ be the operator defined in \Cref{eq:operator_A}, with $m<0$ and $m_b<0$. Then $\forall \lambda > 0$ the operator $ A+\lambda\mathbb{I}:\mathcal{D}(A) \rightarrow \mx$ is surjective.
\end{lemma}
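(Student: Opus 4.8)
The plan is to reduce the resolvent equation $(A+\lambda\mathbb{I})(u,v,w,z)=(f_1,f_2,f_3,f_4)$ to a single stationary elliptic system for the pair $(u,v)$ carrying a Wentzell-type boundary condition, to solve that system by the Lax--Milgram theorem, and then to recover the $H^2$-regularity needed to land in $\md(A)=\mcw\times\mcv$ by an elliptic bootstrap.

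First I would write the equation row by row. The first two rows give the algebraic relations $w=\lambda u-f_1$ and $z=\lambda v-f_2$; substituting them into the last two rows eliminates $w$ and $z$ and leaves, writing $g_1:=f_3+\lambda f_1$ and $g_2:=f_4+\lambda f_2$, the coupled problem
\[
(-\lb-m+\lambda^2)u=g_1 \quad \text{in } M^\circ, \qquad \gamma_+ u+(-\lb_b-m_b+\lambda^2)v=g_2 \quad \text{on } \partial M,
\]
supplemented by the constraint $v=\gamma_- u$ encoded in $\mcv$. Since $(f_1,f_2)\in\mcv$ and $(f_3,f_4)\in\mcu$, both $g_1\in L^2(M^\circ)$ and $g_2\in L^2(\partial M)$.

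Next comes the weak formulation. Testing the interior equation against $\ol\phi$ and the boundary equation against $\ol\psi=\ol{\gamma_-\phi}$, integrating by parts on $M$ (the boundary term $-\intb \gamma_+ u\,\ol{\gamma_-\phi}\,dh$ produced by Green's identity is rewritten through the boundary equation) and on the closed manifold $\partial M$ (no boundary term appears), I obtain the problem: find $(u,v)\in\mcv$ such that
\[
B\big((u,v),(\phi,\psi)\big)=\intm g_1 \ol\phi\, dg+\intb g_2 \ol\psi\, dh \qquad \text{for all } (\phi,\psi)\in\mcv,
\]
where
\[
B\big((u,v),(\phi,\psi)\big)=\big((u,v),(\phi,\psi)\big)_\mcv+\lambda^2\Big(\intm u \ol\phi\, dg+\intb v \ol\psi\, dh\Big).
\]
The decisive point is that $B$ is precisely the inner product of $\mcv$ plus a nonnegative multiple of the $\mcu$-pairing, so that $\mathrm{Re}\,B((u,v),(u,v))=\norm{(u,v)}_\mcv^2+\lambda^2\norm{(u,v)}_\mcu^2\geq\norm{(u,v)}_\mcv^2$. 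Hence $B$ is coercive; it is clearly bounded, and the right-hand side is a bounded antilinear functional on $\mcv$ because $\mcv\hookrightarrow\mcu$ continuously. Lax--Milgram then produces a unique weak solution $(u,v)\in\mcv$. Here Assumption \eqref{hyp:mass} (in the form $m,m_b<0$) serves only to make $\norm{\cdot}_\mcv$ an honest norm equivalent to the $H^1\times H^1$ norm, as in \Cref{remark:equivalent_norms}; coercivity itself is already forced by the $\lambda^2$ term.

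Finally, the step I expect to be the real work is upgrading $(u,v)\in\mcv$ to $(u,v)\in\mcw$. Testing with $\phi\in H^1_0(M^\circ)$ shows $(-\lb+\lambda^2-m)u=g_1$ holds distributionally, so $\lb u\in L^2(M^\circ)$ and interior elliptic regularity gives $u\in H^2_{\mathrm{loc}}(M^\circ)$; moreover a function $u\in H^1(M^\circ)$ with $\lb u\in L^2(M^\circ)$ has a well-defined Neumann trace $\gamma_+ u\in H^{-1/2}(\partial M)$ by the Lions--Magenes theory, and the generalized Green formula shows the boundary equation holds in $H^{-1/2}(\partial M)$. Reading this equation as $-\lb_b v=g_2-\gamma_+ u-(\lambda^2-m_b)v$ on the closed manifold $\partial M$, its right-hand side lies in $H^{-1/2}(\partial M)$, so elliptic regularity for $\lb_b$ gains two derivatives and yields $v\in H^{3/2}(\partial M)$. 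Then $\gamma_- u=v\in H^{3/2}(\partial M)$ together with $\lb u\in L^2(M^\circ)$ gives $u\in H^2(M^\circ)$ by boundary regularity for the Dirichlet problem; consequently $\gamma_+ u\in H^{1/2}(\partial M)\subset L^2(\partial M)$, and feeding this back into the boundary equation shows $-\lb_b v\in L^2(\partial M)$, hence $v\in H^2(\partial M)$. Thus $(u,v)\in\mcw$, while $w=\lambda u-f_1$ and $z=\lambda v-f_2$ satisfy $w,z\in H^1$ and $\gamma_- w=\lambda\gamma_- u-\gamma_- f_1=\lambda v-f_2=z$, so $(w,z)\in\mcv$ and $(u,v,w,z)\in\md(A)$ solves $(A+\lambda\mathbb{I})(u,v,w,z)=(f_1,f_2,f_3,f_4)$. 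The main obstacle is exactly this Wentzell bootstrap: the Dirichlet trace of the weak solution is a priori only in $H^1(\partial M)$, short of the $H^{3/2}(\partial M)$ one needs for $u\in H^2(M^\circ)$, so one must first gain the missing half-derivative on the boundary from the boundary equation before the interior $H^2$-estimate can be invoked.
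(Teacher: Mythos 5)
Your proof is correct, and the overall architecture (reduction to a variational problem, Lax--Milgram, then an elliptic bootstrap through $H^{-1/2}(\partial M)$, $v\in H^{3/2}(\partial M)$, $u\in H^2(M^\circ)$, $v\in H^2(\partial M)$) mirrors the paper's; the regularity half of your argument is essentially identical to \Cref{lemma:A_minus_lambda_surjective} as proved there. Where you genuinely diverge is in the choice of unknowns for the variational step: you eliminate $(w,z)$ via $w=\lambda u-F_1$, $z=\lambda v-F_2$ and solve for $(u,v)$, so that the sesquilinear form is exactly $(\cdot,\cdot)_\mcv+\lambda^2(\cdot,\cdot)_\mcu$ and coercivity is immediate with constant $1$; the paper instead eliminates $(u,v)$ via $u=(F_1+w)/\lambda$, $v=(F_2+z)/\lambda$ and solves for $(w,z)$, obtaining the form \Cref{eq:bilinear_form}, whose coercivity constant is $1/\lambda$ and in which the masses enter explicitly (as written there they even appear as inverse powers, which is why the paper needs \Cref{rem:massless_bilinear_form} to cover $m=0$ or $m_b=0$ by switching to a different bilinear form). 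Your elimination buys a cleaner and more uniform argument: as you correctly observe, strict negativity of $m,m_b$ is used only to make $\norm{\cdot}_\mcv$ equivalent to the $H^1\times H^1$ norm, the $\lambda^2$ zeroth-order term alone forces coercivity, and the cases $m=0$ or $m_b=0$ require no separate treatment. The trade-off is minor: the paper's unknowns $(w,z)$ are the ones that appear with the factor $\lambda$ in the last two rows, so its substitution avoids squaring $\lambda$, but nothing in either argument depends on that. Both routes land on the same weak solution and the same bootstrap, so I consider your proposal a complete and in this respect slightly tidier proof of the lemma.
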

\begin{proof}
Let $\lambda>0$ and let $\mathcal{F} = (F_1,F_2,F_3,F_4) \in \mathcal{V} \times \mathcal{U}$. We need to find $X= (u,v,w,z) \in \mathcal{D}(A)$ such that 
\begin{equation}\label{eq:surj}
AX + \lambda X = \mathcal{F}
\end{equation}
which is tantamount to
\begin{subequations}
\begin{align}
& -w + \lambda u = F_1 \ \label{eq:surj1} \\
& -z+ \lambda v = F_2 \ \label{eq:surj2}  \\
& - \lb u - m u + \lambda w = F_3 \ & \textit{in} \ M^\circ \ \label{eq:surj3} \\
& - \lb_b v - m v + \gamma_+ u +\lambda z = F_4 \ & \textit{on} \ \partial M \ \ \label{eq:surj4} 
\end{align}
\end{subequations}
\Cref{eq:surj3,eq:surj4} yield that a solution $X$ to \Cref{eq:surj} must satisfy the following variational problem:
\begin{equation}\label{eq:surj_variational}
\begin{split}
\intm g(\nabla u, \nabla \ol{a}) dg + \intb h(\nabla v, \nabla \ol{b}) dh -  \intm m u \ol{a} \ dg - \intb m_b v \ol{b} \ dh \\ + \lambda \intm w \ol{a} \ dg + \lambda \intb z \ol{b} \ dh = \intm F_3 \ol{a} \ dg + \intb F_4 \ol{b} \ dh
\end{split}
\end{equation}
for every $(a,b) \in \mathcal{V}$.
Solving \Cref{eq:surj1,eq:surj2} with respect to $u$ and $v$ respectively, we can write the previous expression as
\begin{align*}
\dfrac{1}{\lambda} \intm g(\nabla w, \nabla \ol{a}) dg + \dfrac{1}{\lambda} \intb   h(\nabla z, \nabla \ol{b}) dh \\
- \dfrac{1}{\lambda}  \intm m^{-1}  w \ol{a} \ dg - \dfrac{1}{\lambda}  \intb m_b  ^{-1} z \ol{b} \ dh + \lambda \intm w \ol{a} \ dg + \lambda \intb z \ol{b} \ dh \\
= \intm F_3 \ol{a} \ dg + \intb F_4 \ol{b} + \dfrac{1}{\lambda} \intm g(\nabla F_1, \nabla \ol{a})  \ dg + \dfrac{1}{\lambda} \intb h(\nabla F_2, \nabla \ol{b})  \ dh \\
+ \dfrac{1}{\lambda} \intm m F_1 \ol{a} \ dg + \dfrac{1}{\lambda} \intb m_b F_2 \ol{b} \ dh
\end{align*}
The bilinear form appearing at the left hand side can be written as:
\begin{equation}\label{eq:bilinear_form}
\begin{split}
\mathcal{B} \left(
\begin{pmatrix}
w\\
z
\end{pmatrix}
,
\begin{pmatrix}
a\\
b
\end{pmatrix}
\right)
=
\dfrac{1}{\lambda} \intm g(\nabla w, \nabla \ol{a}) dg +  \dfrac{1}{\lambda} \intb h(\nabla z, \nabla \ol{b}) dh \\
+ \dfrac{1}{\lambda} \intm \left( -m^{-1} + \lambda^2)  \right) w \ol{a} \ dg + \dfrac{1}{\lambda} \intb \left( -m_b  ^{-1} + \lambda^2   \right) z \ol{b} \ dh
\end{split}
\end{equation}
We claim that $\mathcal{B}$ is continuous and coercive for every $\lambda>0$. Indeed, applying the Cauchy-Schwartz inequality and considering the \Cref{hyp:mass} on $m,m_b$, we readily find that
\begin{equation}
\mathcal{B} \left(
\begin{pmatrix}
w\\
z
\end{pmatrix}
,
\begin{pmatrix}
a\\
b
\end{pmatrix}
\right) \leq C_\lambda 
\left\|
\begin{pmatrix}
w\\
z
\end{pmatrix}
\right\|_{\mathcal{V}}
\cdot
\left\| 
\begin{pmatrix}
a\\
b
\end{pmatrix}
\right\|_{\mathcal{V}}
\end{equation}
with $C_\lambda =  \max \left\{ \lambda + \sup_{p \in M^\circ} |m^{-1}(p)|,\lambda + \sup_{q \in \partial M} |m_b^{-1}(q)| \right\} $ a positive constant.
As for the coercivity of the form, a straightforward computation yields 
\begin{equation}
\mathcal{B} \left(
\begin{pmatrix}
w\\
z
\end{pmatrix}
,
\begin{pmatrix}
w\\
z
\end{pmatrix}
\right)
 \geq K_\lambda \left\|
\begin{pmatrix}
w\\
z
\end{pmatrix}
\right\|_{\mathcal{V}}^2
\end{equation}
with $K_\lambda = \dfrac{1}{\lambda}$. By Lax-Milgram theorem we conclude that the variational problem \Cref{eq:surj_variational} admits a unique solution $(w,z)$ in $\mathcal{V}$, which in turns determines uniquely $u$ and $v$ by means of \Cref{eq:surj1,eq:surj2}. Now it remains to prove that the solution $(u,v,w,z)$ is in $\mathcal{D}(A)$. Let $a$ be a test function in $\mathcal{C}_0^\infty(M^\circ)$. Since $\intm \lb u a \ dg = - \intm g(\nabla u,\nabla a) \ dg $, with $\nabla u \in L^2(M^\circ)$, from \Cref{eq:surj_variational} we conclude that \Cref{eq:surj3} is satisfied almost everywhere in $M^\circ$ and that therefore $\lb u$, a priori defined in $H^{-1}(M^\circ)$, is indeed in $L^2(M^\circ)$.

To improve the regularity of $v$, we write the variational problem in \Cref{eq:surj_variational} in terms of $v,z$ and $\gamma_+ u$. After an integration by parts and using \Cref{eq:surj3} we find that a weak solution $v$ must satisfy
\begin{equation*}
-\intb \lb_b v \ol{b} dh  -\intb \gamma_+ u \gamma_-\ol{a} dh - \intb m_b v \ol{b} dh + \lambda \intb z \ol{b} = \intb F_4 \ol{b} dh
\end{equation*}
for every $(a,b) \in \mathcal{V}$. Since $(a,b) \in \mathcal{V}$, making use of the pairing between $H^{1/2}(\partial M)$  and $H^{-1/2}(\partial M)$, we conclude that $\lb_b v$ -- a priori a distribution in $H^{-1}(\partial M)$ -- is indeed in $H^{-1/2}(\partial M)$ and that the equation
\begin{equation*}
- \lb_b v - \gamma_+ u - m_b v + \lambda z = F_4
\end{equation*}
is satisfied almost everywhere, where the distributional action of $\gamma_+ u$ is defined by 
\begin{equation*}
(\gamma_+ u,\zeta)_{L^2(\partial M)} = \int_M g(\nabla u, \nabla \xi) dg - \int_M \lb u \overline{\xi} dg
\end{equation*}
for every $\zeta \in H^{1/2}(\partial M)$ such that there is $\xi \in H^1(M)$ with $\zeta = \gamma_- \xi$. Then elliptic regularity theory yields that $v \in H^{3/2}(\partial M)$ with
\begin{equation*}
\| v \|_{H^{3/2}(\partial M)} \leq C \left( \|\gamma_+ u \|_{H^{-1/2}(\partial M)} + \|g_2 -\lambda v\|_{L^2(\partial M)} \right).
\end{equation*}
Since $\lb u \in L^2(M^\circ)$, the previous expression yields
\begin{equation*}
\| v \|_{H^{3/2}(\partial M)} \leq C \left( \| \lb u \|_{L^{2}(M^\circ)} + \| u \|_{H^1(M^\circ)} + \|g_2 -\lambda v\|_{L^2(\partial M)} \right).
\end{equation*}
At last, making use of \Cref{eq:surj3}, which is satisfied almost everywhere, and taking into account that $m$ is bounded, we arrive to
\begin{equation}\label{eq:v_h_3/2}
\| v \|_{H^{3/2}(\partial M)} \leq C \left( \| F_3 - \lambda u  \|_{L^{2}(M)} + \| u \|_{H^1(M)} + \|g_2 -\lambda v\|_{L^2(\partial M)} \right)
\end{equation}
with $C$ a real positive constant. Since $\lb u \in L^2(M^\circ)$ and $v = \gamma_- u \in H^{3/2}(\partial M)$, we conclude that $u \in H^2(M)$. To improve the regularity of $v$ once more, we observe that \Cref{eq:v_h_3/2} yields that \Cref{eq:surj4} is satisfied almost everywhere and therefore elliptic regularity theory \cite{Evans10,Taylor11} yields that $v \in H^2(\partial M)$. At last, from \Cref{eq:surj1,eq:surj2} we get that $(w,z) \in  \mathcal{V}$, and therefore $(u,v,w,z) \in \mathcal{D}(A)$, which proves the thesis.
\end{proof}
\begin{remark}\label{rem:massless_bilinear_form}
In the bilinear form given in \Cref{eq:bilinear_form} the masses appears as inverse powers, so we cannot consider $m=0$ or $m_b=0$ directly. However we can include these cases making use of the correct bilinear form. Consider for example the case $m=m_b=0$ studied in \cite{Vanspranghe22}. In this case the bilinear form associated with the variational problem in \Cref{eq:surj_variational} is:
\begin{equation*}
\begin{split}
\mathcal{B} \left(
\begin{pmatrix}
w\\
z
\end{pmatrix}
,
\begin{pmatrix}
a\\
b
\end{pmatrix}
\right)
=
\dfrac{1}{\lambda} \intm g(\nabla w, \nabla \ol{a}) dg +  \dfrac{1}{\lambda} \intb h(\nabla z, \nabla \ol{b}) dh \\
+ \lambda \intm  w \ol{a} \ dg + \lambda \intb z \ol{b} \ dh
\end{split}
\end{equation*}
Given these ingredients the proof of  \Cref{lemma:A_minus_lambda_surjective} keeps to hold true. The two cases in which just one of the two masses is zero can be treated in a similar fashion, setting them to zero in \Cref{eq:surj_variational} and then considering the corresponding bilinear form.
\end{remark}
Now we show that the operator $A$ as per \Cref{eq:operator_A} is dissipative.
\begin{lemma}\label{lemma:A_dissipative}
The operator $A$ defined in \Cref{eq:operator_A} is dissipative, with $Re(X,AX)_\mx=0$ for every $X \in \mathcal{D}(A)$.
\end{lemma}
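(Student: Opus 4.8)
The plan is to compute $(X,AX)_\mx$ explicitly for an arbitrary $X=(u,v,w,z)\in\md(A)=\mcw\times\mcv$ and to show that its real part vanishes; dissipativity is then immediate, since $\operatorname{Re}(AX,X)_\mx=\operatorname{Re}\ol{(X,AX)_\mx}=\operatorname{Re}(X,AX)_\mx=0\le 0$. Reading off the rows of \Cref{eq:operator_A}, I would first record $AX=\bigl(-w,\,-z,\,-\lb u-mu,\,\gamma_+ u-\lb_b v-m_b v\bigr)$, and split the $\mx$-inner product into its $\mcv$-part, in which the first two components $(u,v)$ are paired with $(-w,-z)$ using \Cref{eq:inner_product_v}, and its $\mcu$-part, in which the last two components are paired in the two $L^2$ inner products. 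The $\mcv$-part contributes the gradient terms $-\intm g(\nabla u,\nabla\ol{w})\,dg-\intb h(\nabla v,\nabla\ol{z})\,dh$ together with the mass terms $+\intm m\,u\ol{w}\,dg+\intb m_b\,v\ol{z}\,dh$.

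The substance of the argument lies in the $\mcu$-part, namely $\intm w\,\ol{(-\lb u-mu)}\,dg+\intb z\,\ol{(\gamma_+ u-\lb_b v-m_b v)}\,dh$, where I would integrate by parts. Since $u\in H^2(M^\circ)$ and $w\in H^1(M^\circ)$, Green's identity gives $-\intm w\,\ol{\lb u}\,dg=\intm g(\nabla w,\nabla\ol{u})\,dg-\intb \gamma_-(w)\,\ol{\gamma_+ u}\,dh$, while on the closed manifold $\partial M$ there is no boundary contribution, so $-\intb z\,\ol{\lb_b v}\,dh=\intb h(\nabla z,\nabla\ol{v})\,dh$. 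The crucial point, which I expect to be the main step to get right, is that the boundary term $-\intb \gamma_-(w)\,\ol{\gamma_+ u}\,dh$ produced by the interior Green's formula is cancelled exactly by the term $+\intb z\,\ol{\gamma_+ u}\,dh$ arising from the $\gamma_+$ entry in the fourth row of $A$, because the compatibility condition $\gamma_- w=z$ encoded in $\mcv$ forces $\gamma_-(w)=z$. This cancellation is precisely where the Wentzell coupling between the interior and the boundary renders the whole form conservative.

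After this cancellation the $\mcu$-part reduces to $\intm g(\nabla w,\nabla\ol{u})\,dg-\intm m\,w\ol{u}\,dg+\intb h(\nabla z,\nabla\ol{v})\,dh-\intb m_b\,z\ol{v}\,dh$. Adding the $\mcv$-part, each surviving term appears alongside its complex conjugate: the interior gradient contribution $\intm g(\nabla w,\nabla\ol{u})\,dg-\intm g(\nabla u,\nabla\ol{w})\,dg$ is purely imaginary because $g$ is real and symmetric, so that $\ol{\intm g(\nabla w,\nabla\ol{u})\,dg}=\intm g(\nabla u,\nabla\ol{w})\,dg$, and similarly for the boundary gradients; the mass contribution $\intm m\,u\ol{w}\,dg-\intm m\,w\ol{u}\,dg$ is purely imaginary as well, since $m$ and $m_b$ are real by \Cref{hyp:mass}. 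Taking real parts annihilates every term, which yields $\operatorname{Re}(X,AX)_\mx=0$ and hence dissipativity. The only regularity facts required are that $u\in H^2(M^\circ)$ and $w\in H^1(M^\circ)$ legitimize Green's formula and that $\gamma_+ u\in L^2(\partial M)$, both guaranteed by $X\in\md(A)$.
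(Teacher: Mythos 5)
Your proposal is correct and follows essentially the same route as the paper: compute $(X,AX)_\mx$, integrate by parts so that the boundary term from Green's identity on $M$ cancels against the $\gamma_+$ coupling term via $\gamma_- w = z$, and observe that every surviving term pairs with its complex conjugate, making $(X,AX)_\mx$ purely imaginary. You merely spell out the key cancellation that the paper compresses into the phrase ``after an integration by parts.''
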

\begin{proof}
Let $X=(u,v,w,z) \in \mathcal{D}(A)$.
After an integration by parts we find that
\begin{equation}
\begin{split}
(X,AX)_\mx = 
- \intm g(\nabla u, \nabla \ol{w}) dg - \intb g(\nabla v, \nabla \ol{z}) dh \\
+ \intm g(\nabla \ol{u}, \nabla w) dg + \intb g(\nabla \ol{v}, \nabla z) dh\\
+ \intm m u \ol{w} dg - \intm m w \ol{u} dg + \intb m_b v \ol{z} dh - \intb m_b z \ol{v} dh \\
= 2i Im \left(
\begin{pmatrix}
u \\
v
\end{pmatrix}
,
\begin{pmatrix}
w \\
z
\end{pmatrix}
\right)_V
\end{split}
\end{equation}
from which we find that $Re(X,AX)_\mx=0$.
\end{proof}
\begin{remark}
Without the last two terms in the inner product of $\mathcal{V}$ as per \Cref{eq:inner_product_v} the operator $A$ would not have been dissipative.
\end{remark}
\begin{proposition}\label{prop:strongly_continous}
The operator $A$ generates a strongly continuous semigroup $S_t = e^{At}$ of linear contractions on $\mx$.
\end{proposition}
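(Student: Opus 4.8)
The plan is to verify the hypotheses of the Lumer--Phillips theorem, since the two preceding lemmas already supply its two main ingredients. Recall that on a Hilbert space a densely defined operator generates a $\mathcal{C}^0$ semigroup of contractions precisely when it is dissipative and the range condition $\mathrm{Range}(\lambda\mathbb{I}+A)=\mx$ holds for a single $\lambda>0$. \Cref{lemma:A_dissipative} gives dissipativity, in fact the stronger identity $Re(X,AX)_\mx=0$ for all $X\in\mathcal{D}(A)$, so that $A$ and $-A$ are simultaneously dissipative, and \Cref{lemma:A_minus_lambda_surjective} gives the range condition for every $\lambda>0$. Thus the whole content of the proposition reduces to assembling these facts, once the density of the domain has been checked.

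The one point that still requires an argument is that $\mathcal{D}(A)=\mcw\times\mcv$ is dense in $\mx=\mcv\times\mcu$. I would split this into the two factors. For the first factor I would show that $\mcw$ is dense in $\mcv$ in the $\mcv$-norm: any pair $(u,v)\in\mcv$ can be approximated by elements of $\mcw=(H^2(M^\circ)\times H^2(\partial M))\cap\mcv$ using the density of $H^2$ in $H^1$ on $M^\circ$ and on $\partial M$, taking care that the approximants preserve the trace constraint $\gamma_- u=v$; by \Cref{remark:equivalent_norms} the $\mcv$-norm is equivalent to the ambient $H^1\times H^1$ norm, so the mass terms introduce no difficulty. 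For the second factor, $\mcv$ is dense in $\mcu=L^2(M^\circ)\times L^2(\partial M)$ because $H^1$ is dense in $L^2$. Together these give the density of $\mathcal{D}(A)$ in $\mx$.

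Having density, dissipativity, and the range condition at hand, the Lumer--Phillips theorem applies and yields a $\mathcal{C}^0$ contraction semigroup $S_t=e^{At}$ generated by $A$. The contraction bound $\|S_t\|\leq 1$ is forced by dissipativity, and the sharper identity $Re(X,AX)_\mx=0$ shows that the generated semigroup is in fact isometric, which is the structural reason one may treat $A$ and $-A$ on an equal footing.

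I expect the only genuinely non-mechanical step to be the density of $\mcw$ in $\mcv$: approximating a pair $(u,v)$ subject to the coupling $\gamma_- u=v$ by smoother pairs that still satisfy the same coupling. Once the trace compatibility is arranged, this collapses to the standard density of $H^2$ in $H^1$, and the remainder of the argument is a direct citation of the Lumer--Phillips theorem combined with \Cref{lemma:A_dissipative,lemma:A_minus_lambda_surjective}.
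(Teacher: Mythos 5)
Your proposal is correct and follows essentially the same route as the paper: the paper's proof simply combines \Cref{lemma:A_minus_lambda_surjective,lemma:A_dissipative} to conclude that $A$ is maximally dissipative and then invokes the Lumer--Phillips theorem. Your additional verification of the density of $\mathcal{D}(A)$ in $\mx$ (which the paper leaves implicit, noting only that $\mcw$ is dense in $L^2(M^\circ)\times L^2(\partial M)$ with references) and your observation that $Re(X,AX)_\mx=0$ makes the semigroup isometric are both correct refinements, not deviations.
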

\begin{proof}
We know from \Cref{lemma:A_minus_lambda_surjective,lemma:A_dissipative} that $A$ is a maximally dissipative operator, therefore the thesis follows applying Lumer-Phillips theorem.
\end{proof}
\begin{proposition}\label{prop:evolution_well_posedness}
Let $X(0) \in \mathcal{D}(A)$ and suppose that the source $\mathcal{F}=(F_1,F_2,F_3,F_4)$ is in $\mathcal{C}^1([0,T],\mx)$. Then the initial value problem in \Cref{eq:evolution_linear} admits a unique classical solution
$X(t) \in C^1([0,T],\mathcal{D}(A))$. In the case in which the initial datum $X(0) \in \mx$, the problem in \Cref{eq:evolution_linear} admits a unique mild solution $X(t) \in C^0([0,T],\mx)$ satisfying
\begin{equation}
\|X(t)\|_\mx \leq   \|X(0)\|_\mx + t \max_{s \in [0,T]} \| \mathcal{F}(s) \|_{\mx}
\end{equation}
\end{proposition}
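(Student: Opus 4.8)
The plan is to recast \Cref{eq:evolution_linear} as the abstract inhomogeneous Cauchy problem studied in \Cref{sec:analytic_preliminaries} and then to quote the existence, regularity and representation results already available there. Writing the equation as $\dot X = -A X + \mathcal F$, the operator playing the role of the generator is $-A$ rather than $A$. First I would verify that $-A$ generates a $\mathcal{C}^0$ contraction semigroup: by \Cref{lemma:A_dissipative} one has $Re(X,AX)_\mx = 0$ for every $X \in \md(A)$, so that $-A$ is dissipative, while \Cref{lemma:A_minus_lambda_surjective} provides the surjectivity of $A + \lambda\mathbb{I} = \lambda\mathbb{I} - (-A)$ for every $\lambda > 0$, which is precisely the range condition required of $-A$. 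Lumer--Phillips then applies to $-A$ verbatim as in \Cref{prop:strongly_continous}; in fact, since the real part vanishes identically, $A$ generates a unitary group and both signs are admissible. Let $e^{-At}$ denote the resulting contraction semigroup.

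With this in hand, for $X(0) \in \md(A)$ and $\mathcal F \in \mathcal{C}^1([0,T],\mx)$ I would invoke \Cref{prop:inhomogeneous_evolution_well_posedness_1} with generator $-A$ to obtain the unique classical solution, given by the Duhamel formula
\[
X(t) = e^{-At}X(0) + \int_0^t e^{-A(t-s)}\mathcal F(s)\,ds .
\]
Since $\mathcal F \in \mathcal{C}^1([0,T],\mx) \subset H^1([0,T],\mx)$, \Cref{prop:inhomogeneous_evolution_well_posedness_2} then upgrades this to a strict solution, yielding the regularity $X \in \mathcal{C}^1([0,T],\mx) \cap \mathcal{C}^0([0,T],\md(A))$ asserted in the statement. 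When only $X(0) \in \mx$ is assumed, the second part of \Cref{prop:inhomogeneous_evolution_well_posedness_1} produces the unique mild solution $X \in \mathcal{C}^0([0,T],\mx)$, again represented by the same Duhamel formula.

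It then remains to establish the a priori bound, which follows from contractivity. Using $\|e^{-At}\|_{\mathcal{B}(\mx)} \leq 1$, taking norms in the Duhamel representation and applying the triangle inequality gives
\[
\|X(t)\|_\mx \leq \|e^{-At}X(0)\|_\mx + \int_0^t \|e^{-A(t-s)}\mathcal F(s)\|_\mx\,ds \leq \|X(0)\|_\mx + \int_0^t \|\mathcal F(s)\|_\mx\,ds ,
\]
and bounding the integrand by $\max_{s\in[0,T]}\|\mathcal F(s)\|_\mx$ yields the claimed inequality. I do not expect a genuine obstacle here, as the whole statement is a packaging of the abstract machinery assembled earlier. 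The one point that demands care is the sign bookkeeping: one must be sure that the surjectivity proved for $A + \lambda\mathbb{I}$ is exactly what certifies $-A$ (and not $A$) as the generator driving \Cref{eq:evolution_linear}, since the equation appears in the form $\dot X + AX = \mathcal F$. Once that identification is made, the existence, the regularity upgrade, and the energy-type estimate are all routine.
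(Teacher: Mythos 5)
Your proposal is correct and follows essentially the same route as the paper, which likewise obtains the result as an immediate consequence of \Cref{prop:strongly_continous}, \Cref{prop:inhomogeneous_evolution_well_posedness_1} and \Cref{prop:inhomogeneous_evolution_well_posedness_2} together with Duhamel's formula and contractivity. Your remark on the sign bookkeeping is well taken: since \Cref{eq:evolution_linear} reads $\dot X + AX = \mathcal F$, the generator is indeed $-A$, and the surjectivity of $A+\lambda\mathbb{I}$ from \Cref{lemma:A_minus_lambda_surjective} is exactly the range condition for $-A$ in Lumer--Phillips, a point the paper leaves implicit.
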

\begin{proof}
The well-posedness of the problem in \Cref{eq:evolution_linear} and the bound on the solution $X(t)$ are immediate consequences of \Cref{prop:strongly_continous,prop:inhomogeneous_evolution_well_posedness_1,prop:inhomogeneous_evolution_well_posedness_2}. 
\end{proof}
\begin{remark}
This well-posedness result keeps to hold true also if the same problem is considered in $[-T,T]$, due to the symmetry of \Cref{eq:linear_problem} under the change of coordinates $t \mapsto -t$.
\end{remark}
\begin{remark}
Consider the homogeneous problem, with the source terms $F$ and $G$ identically zero. Define the energy $\mathcal{E}(t) = \| X(t) \|_\mx^2$ and consider a classical solution $X(t) \in C^1([0,T],\mathcal{D}(A)) $. A short computation shows that $\partial_t \mathcal{E}(t) = 2Re(X,AX)_\mx$, therefore \Cref{lemma:A_dissipative}, in particular the fact that $Re(X,AX)_\mx=0$, immediately entails the conservation of the total energy of the coupled system, energy which however can be exchanged between the field in interior and the one on the boundary.
\end{remark}
\begin{remark}
The energy estimate in \Cref{prop:evolution_well_posedness} immediately yields the finite speed of propagation of the solutions.
\end{remark}

\section{The nonlinear non-autonomous equation}\label{sec:time_dependent_metric}
In this section we consider the non-autonomous nonlinear problem as per \Cref{eq:non_linear_problem}, considering a time-dependent Riemannian metric $g_t$ satisfying Assumptions \eqref{hypothesis:smooth_metric}.

\subsection{Well-posedness results for the linear non-autonomous problem}
We begin proving a low regularity well-posedness result for the linearized problem, adapting a method  originally developed in \cite{Kato53}. 
\begin{proposition}\label{prop:low_regularity_non_auto_well_posedness}
Let $F \in \mathcal{C}^1([0,T],\md (A))$ and suppose that $X_0 \in \md (A)$. Then the linear non-autonomous problem
\begin{equation*}
\begin{cases}
\dot{X} = A(t) X(t) + F(t), \quad t \in [0,T]\\
X(0) = X_0
\end{cases}
\end{equation*}
admits a unique classical solution $X(t) \in \mathcal{C}^1([0,T],\mx) \cap \mathcal{C}^0([0,T],\mathcal{D}(A))$. Furthermore, in the case in which $X_0 \in \mx$, the problem admits a unique mild solution $X \in \mathcal{C}^0([0,T],\mx)$ such that
\begin{equation}\label{eq:bound_low_regularity_non_auto}
\| X(t) \|_\mx \leq \|  X(0) \|_\mx + t \max_{s \in [0,T]} \| F(s) \|_\mx. 
\end{equation}
\end{proposition}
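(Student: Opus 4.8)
The plan is to reduce everything to the non-autonomous well-posedness theorem \Cref{prop:well_posedness_hyperbolic_problems}, whose hypotheses C1--C5 I would verify for the family $\{A(t)\}_{t\in[0,T]}$ of \Cref{eq:operator_A}. The constant-domain requirement is already available: since $M$ is compact and $g_t$ varies smoothly in $t$ (\Cref{hypothesis:smooth_metric}), $\mathcal{D}(A(t))=\mathcal{W}\times\mathcal{V}$ is independent of $t$, as noted after \Cref{eq:operator_A}. Condition C1 is immediate, because freezing the time at any $s\in[0,T]$ is exactly the autonomous situation of \Cref{sec:time_independent_metric}: by \Cref{lemma:A_minus_lambda_surjective,lemma:A_dissipative} and \Cref{prop:strongly_continous}, $A(s)$ generates a $\mathcal{C}^0$ contraction semigroup. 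Thus the whole difficulty is concentrated in C2--C5, all of which concern the regularity in $t$ of $A(t)$ relative to a fixed resolvent.

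To analyze these I would write $B(t,s)=[\mathbb{I}+A(t)][\mathbb{I}+A(s)]^{-1}=\mathbb{I}+[A(t)-A(s)][\mathbb{I}+A(s)]^{-1}$, where $[\mathbb{I}+A(s)]^{-1}$ is the resolvent at $\lambda=1$ furnished by \Cref{prop:strongly_continous} and has norm $\le 1$. From \Cref{eq:operator_A} the only time dependence lives in $\lb(t),\lb_b(t),m(t),m_b(t)$, so $A(t)-A(s)$ has nonzero entries only $-(\lb(t)-\lb(s))-(m(t)-m(s))$ and $-(\lb_b(t)-\lb_b(s))-(m_b(t)-m_b(s))$. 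Since $[\mathbb{I}+A(s)]^{-1}$ maps $\mx$ into $\mathcal{W}\times\mathcal{V}=(H^2\times H^2)\cap\mathcal{V}$, the difference of two second-order operators applied to an $H^2$ function lands back in $L^2$, so $B(t,s)-\mathbb{I}$ is bounded on $\mx$. Uniform boundedness (C2) then follows because on the compact interval $[0,T]$ the coefficients of $\lb(t)-\lb(s)$ and the masses are uniformly bounded, and the elliptic estimate controlling $\|u\|_{H^2}$ by the graph norm of $[\mathbb{I}+A(s)]^{-1}$ can be taken uniform in $s$.

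For C3--C5 I would exploit that $\mathcal{C}^1$-dependence of the data (\Cref{hypothesis:smooth_metric,hyp:mass}) gives $\|A(t)-A(t')\|_{\mathcal{D}(A)\to\mx}\le L|t-t'|$; composing with $[\mathbb{I}+A(s)]^{-1}$ shows that $t\mapsto B(t,s)$ is Lipschitz, hence of bounded variation with $N\le LT$ (C3) and (norm, therefore weak) continuous (C4). Differentiating gives $\partial_t B(t,s)=[\partial_t A(t)][\mathbb{I}+A(s)]^{-1}$, where $\partial_t A(t)$ carries the smooth coefficients $-\partial_t\lb(t)-\partial_t m(t)$ together with the boundary analogues; this is bounded on $\mx$ and strongly continuous in $t$, yielding C5. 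The main obstacle throughout is precisely the uniformity in the time parameter of the elliptic regularity estimates: one must ensure that the $H^2$--graph-norm equivalence on $\mathcal{D}(A)$ and the constants controlling $[\mathbb{I}+A(s)]^{-1}$ do not degenerate as $s$ ranges over $[0,T]$, which is exactly what the smoothness of $g_t$ on the compact interval secures through uniform ellipticity constants.

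With C1--C5 verified, \Cref{prop:well_posedness_hyperbolic_problems} yields the evolution family $U(t,s)$, the unique strict solution $X\in\mathcal{C}^1([0,T],\mx)\cap\mathcal{C}^0([0,T],\mathcal{D}(A))$ for $X_0\in\mathcal{D}(A)$ (our hypothesis $F\in\mathcal{C}^1([0,T],\md(A))$ is stronger than the required $\mathcal{C}^0([0,T],\md(A))$), and the unique mild solution for $X_0\in\mx$. Finally, for the estimate \Cref{eq:bound_low_regularity_non_auto} I would invoke \Cref{lemma:A_dissipative}: since $\operatorname{Re}(X,A(t)X)_\mx=0$, each frozen semigroup is a contraction, so the $U(t,s)$ assembled in \Cref{prop:well_posedness_hyperbolic_problems} as a limit of products of these semigroups satisfies $\|U(t,s)\|\le 1$. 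Inserting this into $X(t)=U(t,0)X_0+\int_0^t U(t,\tau)F(\tau)\,d\tau$ gives $\|X(t)\|_\mx\le\|X_0\|_\mx+\int_0^t\|F(\tau)\|_\mx\,d\tau\le\|X_0\|_\mx+t\max_{s\in[0,T]}\|F(s)\|_\mx$; equivalently, for classical solutions the identity $\tfrac{d}{dt}\|X\|_\mx^2=2\operatorname{Re}(X,A(t)X)_\mx+2\operatorname{Re}(X,F)_\mx$ combined with $\operatorname{Re}(X,A(t)X)_\mx=0$ yields the bound directly, extending to mild solutions by density.
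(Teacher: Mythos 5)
Your proposal is correct and follows essentially the same route as the paper: verify Kato's conditions C1--C5 for the constant-domain family $A(t)$, using the autonomous generation result for C1 and the smooth $t$-dependence of $\lb(t),\lb_b(t),m,m_b$ composed with the bounded resolvent $[\mathbb{I}+A(s)]^{-1}$ for C2--C5, then read off the estimate from contractivity and Duhamel's formula. Your decomposition $B(t,s)=\mathbb{I}+[A(t)-A(s)][\mathbb{I}+A(s)]^{-1}$ is only a cosmetic variant of the paper's two-addend splitting of $(A(t)+\mathbb{I})(A(s)+\mathbb{I})^{-1}$, so the two arguments are substantively identical.
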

\begin{proof}
We prove our claim making use of \Cref{prop:well_posedness_hyperbolic_problems}.
Condition C1 is true by  \Cref{prop:evolution_well_posedness}. To check the validity of the other assumptions, let us write the operator 
\begin{equation*}
B(t,s) = \left( A(t) + \mathbb{I} \right) \left(A(s) + \mathbb{I} \right)^{-1}
\end{equation*}
defined in \Cref{sec:Non-autonomous_evolution_problems} as follows:
\begin{equation}\label{eq:operator_b_decomposition}
\begin{split}
&
\begin{pmatrix}
0 & 0 & 0 & 0\\
0 & 0 & 0 & 0\\
-\lb (t) - m(t) & 0 & 0 & 0 \\
0 & -\lb_b (t) - m_b(t) & 0 & 0
\end{pmatrix}
\left(A(s) + \mathbb{I} \right)^{-1} \\
+ & \begin{pmatrix}
0 & 0 & -1 & 0\\
0 & 0 & 0 & -1\\
0 & 0 & 0 & 0 \\
\gamma_+ & 0 & 0 & 0 
\end{pmatrix}
\left(A(s) + \mathbb{I} \right)^{-1}
\end{split}
\end{equation}
The second addend, which does not depend on $t$, satisfies conditions C2, C3, C4 and C5 because the trace $\gamma_+$ is bounded on $D(A(s))$ and $ \left(A(s) + \mathbb{I} \right)^{-1}:\mx \rightarrow D(A(s))$ is bounded by for every $s \in [0,T]$ by Hille-Yoshida theorem. Now we focus on the first term. The operator
$$
\begin{pmatrix}
0 & 0 & 0 & 0\\
0 & 0 & 0 & 0\\
-\lb (t) - m(t) & 0 & 0 & 0 \\
0 & -\lb_b (t) - m_b(t) & 0 & 0
\end{pmatrix}
$$
depends on $t$ in a differentiable way by \Cref{hypothesis:smooth_metric,hyp:mass}, which along with the boundedness of  $\left(A(s) + \mathbb{I} \right)^{-1}$ for every $s \in [0,T]$ entails that C4 and C5 are met. For the same reason the first term of the sum in \Cref{eq:operator_b_decomposition} is uniformly bounded (C2) and of bounded variation (C3) for $s,t \in [0,T]$. The hypotheses of \Cref{prop:well_posedness_hyperbolic_problems} are then met and the thesis follows, the bound on $\| X(t) \|_\mx$ being an immediate consequence of Duhamel's principle.
\end{proof}
\subsection{The nonlinear non-autonomous problem}
Finally we focus on the nonlinear problem as per \Cref{eq:non_linear_problem}, which we study in the form
\begin{equation}\label{eq:evolution_nonlinear}
\dfrac{d}{dt}
\begin{pmatrix}
u \\ v \\ w \\ z
\end{pmatrix}
+ A(t)
\begin{pmatrix}
u \\ v \\ w \\ z
\end{pmatrix}
=
\begin{pmatrix}
0 \\
0 \\
\mathcal{N} \left(t,p_1,u(t)(p_1)\right) \\ \mathcal{N}_b \left(t,p_2,v(t)(p_2)\right)
\end{pmatrix}
\end{equation}
where the linear operator $A(t)$ was defined in \Cref{eq:operator_A}.
For every $t \in \mathbb{R}$, $p=(p_1,p_2) \in M \times \partial M$ and $X=(u,v,w,z) \in \mx$ we set
\begin{equation}
\mathcal{G}(t,p,X)=
\begin{pmatrix}
0 \\
0 \\
\mathcal{N} \left(t,p_1,u(t)(p_1)\right)\\
\mathcal{N}_b \left(t,p_2,v(t)(p_2)\right)
\end{pmatrix}
\end{equation}
and
\begin{equation}
\mathcal{F}(t,X)(p)=\mathcal{G}(t,p,X)
\end{equation}
For every $X \in \mx$, by \Cref{hyp:nonlinearities_1,hyp:nonlinearities_2} the function $t \mapsto \mathcal{F}(t,X)$ is mapping continuously $[0,T]$ to $\mx$, therefore for every $T>0$ we have $\mathcal{F}(t,X) \in \mathcal{C}^0([0,T],\mx)$.
Now \Cref{eq:evolution_nonlinear} can be written as
\begin{equation}\label{eq:evolution_nonlinear_2}
\dfrac{d}{dt}X + A(t)X = \mathcal{F}(t,X)
\end{equation}
For $\tau,\rho > 0 $ we introduce the space
$$B_{\tau,\rho} = \left\{ f \in \mathcal{C}^0\left( [0,\tau] , \mx \right) \ \textit{such that} \ \|f\|_\infty \leq \rho  \right\}$$
where $\| \cdot \|_\infty$ is the supremum norm on the space $\mathcal{C}^0\left( [0,\tau],\mx \right)$. A standard argument yield that the pair $\left( B_{\tau,\rho},\| \cdot  \|_\infty \right)$ is a complete metric space. In this space the nonlinear term $\mathcal{F}$  satisfies the following lemma.
\begin{lemma}\label{lemma:nonlinear_lipschitz}
For every $\tau,\rho>0$ there is $L_{\tau,\rho}>0$ such that 
\begin{equation*}
\| \mathcal{F}(t,X)-\mathcal{F}(t,Y) \|_{\infty} \leq L_{\tau,\rho} \|X-Y\|_\infty
\end{equation*}
for every $X,Y \in B_{\tau,\rho}$.
\end{lemma}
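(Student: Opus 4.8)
The plan is to exploit the block structure of $\mathcal{F}$, which reduces the $\mx$-norm of the difference to two scalar $L^2$-estimates, and then to control each of these by combining the pointwise Lipschitz bound of \Cref{hyp:nonlinearities_1}(b) with Hölder's inequality and the Sobolev embeddings whose critical exponents appear in \eqref{eq:critical_exponents}. Writing $X=(u_1,v_1,w_1,z_1)$ and $Y=(u_2,v_2,w_2,z_2)$, the first two components of $\mathcal{G}$ vanish, so the $\mcv$-part of $\mathcal{F}(t,X)-\mathcal{F}(t,Y)$ is zero and the $\mx$-norm collapses onto the $\mcu=L^2(M^\circ)\times L^2(\partial M)$ factor:
\[
\|\mathcal{F}(t,X)-\mathcal{F}(t,Y)\|_\mx^2 = \|\mathcal{N}(t,\cdot,u_1)-\mathcal{N}(t,\cdot,u_2)\|_{L^2(M^\circ)}^2 + \|\mathcal{N}_b(t,\cdot,v_1)-\mathcal{N}_b(t,\cdot,v_2)\|_{L^2(\partial M)}^2 .
\]
It thus suffices to bound each summand by a constant times $\|X-Y\|_\mx^2$, uniformly in $t\in[0,\tau]$.

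For the interior term I would insert the bound from \Cref{hyp:nonlinearities_1}(b), use $(1+a+b)^2\le 3(1+a^2+b^2)$, and integrate over $M^\circ$, so that it suffices to control $\intm |u_1-u_2|^2|u_i|^{2(\alpha-1)}\,dg$ for $i=1,2$ (the constant term contributing merely $\|u_1-u_2\|_{L^2}^2$). Applying Hölder with the conjugate exponents $\mathsf{C_{M^\circ}}/2$ and $\mathsf{C_{M^\circ}}/(\mathsf{C_{M^\circ}}-2)$ bounds this by
\[
\|u_1-u_2\|_{L^{\mathsf{C_{M^\circ}}}(M^\circ)}^2\,\|u_i\|_{L^{r}(M^\circ)}^{2(\alpha-1)}, \qquad r=\frac{2(\alpha-1)\,\mathsf{C_{M^\circ}}}{\mathsf{C_{M^\circ}}-2}.
\]
The embedding $H^1(M^\circ)\hookrightarrow L^{\mathsf{C_{M^\circ}}}(M^\circ)$ controls the first factor, and it also controls the second provided $r\le\mathsf{C_{M^\circ}}$, an inequality that reduces exactly to the subcritical condition $\alpha\le\mathsf{C_{M^\circ}}/2$ of \Cref{hyp:nonlinearities_2}.

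Since $X,Y\in B_{\tau,\rho}$ and, by \Cref{remark:equivalent_norms}, the $\mcv$-norm is equivalent to the $H^1$-norm on the first two components, the quantities $\|u_1\|_{H^1(M^\circ)}$ and $\|u_2\|_{H^1(M^\circ)}$ are bounded by a constant multiple of $\rho$. Combining the estimates yields
\[
\|\mathcal{N}(t,\cdot,u_1)-\mathcal{N}(t,\cdot,u_2)\|_{L^2(M^\circ)} \le C(\rho)\,\|u_1-u_2\|_{H^1(M^\circ)} \le C(\rho)\,\|X-Y\|_\mx ,
\]
uniformly in $t$. The boundary term is handled verbatim, replacing $M^\circ$, $\mathsf{C_{M^\circ}}$, $\alpha$, $K$ by $\partial M$, $\mathsf{C_{\partial M}}$, $\beta$, $K_b$, and using $H^1(\partial M)\hookrightarrow L^{\mathsf{C_{\partial M}}}(\partial M)$. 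Taking the supremum over $t\in[0,\tau]$ and setting $L_{\tau,\rho}$ equal to the larger of the two resulting constants gives the claim.

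The main obstacle is the exponent bookkeeping in the Hölder step: one must verify that the conjugate exponent attached to the growth factor is precisely the one made admissible by the subcritical hypothesis $\alpha\le\mathsf{C_{M^\circ}}/2$ (respectively $\beta\le\mathsf{C_{\partial M}}/2$). One should also treat separately the cases $\mathsf{C_{M^\circ}}=\infty$ (that is $n=2$) and $\mathsf{C_{\partial M}}=\infty$ ($n=2,3$), where there is no finite critical exponent; there one simply chooses any finite Lebesgue exponent large enough to accommodate the given $\alpha$ (resp. $\beta$), which is possible because in those dimensions $H^1$ embeds into every $L^p$ with $p<\infty$. The Carathéodory property assumed in \Cref{hyp:nonlinearities_1} ensures the measurability needed for all the integrals above to be meaningful.
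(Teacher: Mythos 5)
Your proposal is correct and follows essentially the same route as the paper: reduce to the two $L^2$ components, apply the pointwise Lipschitz bound of \Cref{hyp:nonlinearities_1}(b), use H\"older with exponents tied to the critical Sobolev exponents, and observe that the resulting integrability requirement is exactly the subcriticality condition $\alpha\le\mathsf{C_{M^\circ}}/2$ (resp.\ $\beta\le\mathsf{C_{\partial M}}/2$) of \Cref{hyp:nonlinearities_2}. The only cosmetic difference is that you place the H\"older exponents at the critical endpoint (using the continuous embedding $H^1\hookrightarrow L^{\mathsf{C_{M^\circ}}}$), while the paper keeps strictly subcritical conjugate exponents and invokes Rellich--Kondrachov; both are valid, and your explicit treatment of the $\mathsf{C}=\infty$ cases is a welcome addition.
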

\begin{proof}
Let $X=(u_X,v_X,w_X,z_X) \in B_{\tau,\rho}$. Then Assumptions \eqref{hyp:nonlinearities_1} and \eqref{hyp:nonlinearities_2}, together with Sobolev embedding theorem, yield that 
$\mathcal{N}(t,p,u_X) \in L^2(M^\circ)$ and $\mathcal{N}_b(t,q,v_X) \in L^2(\partial M)$ for every $t \in [0,\tau], p \in M, q \in \partial M$, where $\mathsf{C_{M^\circ}}$ and $\mathsf{C_{\partial M}}$ are the Sobolev embedding constants given in \eqref{eq:critical_exponents}. Let $Y=(u_Y,v_Y,w_Y,z_Y) \in B_{\tau,\rho}$. The difference
\begin{equation*}
\mathcal{F}(t,X)-\mathcal{F}(t,Y) =
\begin{pmatrix}
0\\
0\\
\mathcal{N}(t,p,u_X) - \mathcal{N}(t,p,u_Y)  \\
\mathcal{N}_b(t,p,v_X) - \mathcal{N}_b(t,p,v_Y)  \\
\end{pmatrix}.
\end{equation*}
is in $\mx$ because $\mathsf{C_{M^\circ}}$ and $\mathsf{C_{\partial M}}$ are both greater than 2. Using \Cref{hyp:nonlinearities_1,hyp:nonlinearities_2},  H\"{o}lder's inequality and Rellich-Kondrachov embedding theorem, we find that
\begin{equation}\label{eq:lipschitz_h1_interior}
\| \mathcal{N}(t,p,u_X) - \mathcal{N}(t,p,u_Y)  \|_{L^2(M^\circ)} \leq C(\rho,t) \| u_X(t) - u_Y(t) \|_{H^1(M)}
\end{equation}
and
\begin{equation}\label{eq:lipschitz_h1_boundary}
\begin{split}
\| \mathcal{N}_b(t,p,v_X) - \mathcal{N}_b(t,p,v_Y)  \|_{L^2(\partial M)} \leq C_b(\rho,t) \| v_X(t) - v_Y(t) \|_{H^1(\partial M)}
\end{split}
\end{equation}
for every $t \in [0,\tau]$, where $C(\rho,\tau)$ and $C_b(\rho,\tau)$ are non-decreasing in $\tau,\rho$ by  \Cref{hyp:nonlinearities_1}.
Let us prove this claim for \Cref{eq:lipschitz_h1_interior} -- The proof of \Cref{eq:lipschitz_h1_boundary} being analogous.
By H\"{o}lder's inequality and \Cref{hyp:nonlinearities_1} we find
\begin{equation*}
\begin{split}
\| \mathcal{N}(t,p,u_X) - \mathcal{N}(t,p,u_Y)  \|_{L^2(M^\circ)} \leq \\
C_{\tau,\rho} \| u_x - u_Y \|_{L^{2a}(M)}^{1/2a}  \| 1 + |u_X|^{\alpha-1} + |u_Y|^{\alpha-1} \|_{L^{2b}(M)}^{1/2b} 
\end{split}
\end{equation*}
with $a,b$ conjugate exponents and $C_{\tau,\rho}$ the constant appearing in \Cref{eq:non_linearities_form}. Thanks to \Cref{hyp:nonlinearities_2}, choosing $2b<\mathsf{C_{M^\circ}}/(\alpha-1)$ such that the conjugate exponent $2a$ is less than $\mathsf{C_{M^\circ}}$ and applying Rellich-Kondrachov embedding theorem, we get our claim with
\begin{equation*}
C(\tau,\rho) =  C_{\tau,\rho} \| 1 + |u_X|^{\alpha-1} + |u_Y|^{\alpha-1} \|_{L^{2b}(M)}^{1/2b} .
\end{equation*}
This choice of the exponents $a,b$ can be always done. The inequality $2a < \mathsf{C_{M^\circ}}$ is tantamount to require $b > \mathsf{C_{M^\circ}}/(\mathsf{C_{M^\circ}} - 2)$. Combining this request with $2b<\mathsf{C_{M^\circ}}/(\alpha-1)$ we get the following inequality for $\alpha$:
\begin{equation*}
\dfrac{\mathsf{C_{M^\circ}}}{\mathsf{C_{M^\circ}}-2} \leq \dfrac{\mathsf{C_{M^\circ}}}{2(\alpha-1)}
\end{equation*}
which yields $\alpha \leq \mathsf{C_{M^\circ}}/2$ -- satisfied by \Cref{hyp:nonlinearities_2}.
Collecting the above inequalities concerning the non-linear terms we find 
\begin{equation*}
\| \mathcal{F}(t,X)-\mathcal{F}(t,Y) \|_\mx \leq L_{\tau,\rho} \|X(t)-Y(t)\|_\mx
\end{equation*}
with $L_{\tau,\rho} = \max (C(\rho,\tau),C_b(\rho,\tau))$.
The thesis follows taking the sup in $t \in [0, \tau]$ of the above inequality.
\end{proof}

\begin{remark}
The lemma above holds true also adding linear terms in $u_t$ and $v_t$, for example considering a non-linearity of the form
\begin{equation*}
\begin{split}
\mathcal{N}(t,x,y,z) & = |y|^{\alpha-1}y + Q(t,x) z\\
\mathcal{N}_b(t,x,y,z) & = |y|^{\beta-1}y  + Q_b(t,x) z
\end{split}
\end{equation*}
with $Q \in \mathcal{C}^\infty([0,T]\times M)$ and $Q_b \in \mathcal{C}^\infty([0,T]\times \partial M)$. Indeed the bound 
\begin{equation*}
\| \mathcal{F}(t,X)-\mathcal{F}(t,Y) \|_\mx \leq L_{\tau,\rho} \|X(t)-Y(t)\|_\mx
\end{equation*}
given in the previous proof keep to hold true even adding these terms.
\end{remark}
Now we can prove the following well-posedness result making use of a fixed point argument.
\begin{proposition}\label{prop:nonlinear_well_posedness_conditional}
Assume that Hypotheses \eqref{hypothesis:smooth_metric}, \eqref{hyp:nonlinearities_1} and \eqref{hyp:nonlinearities_2} hold true. Let $\rho > 0$. Then there is $\tau = \tau(\rho)$ such that for every $X_0 \in \mx$ with $\| X_0 \|_\mx \leq \rho$, the problem in \Cref{eq:evolution_nonlinear_2} admits a unique mild solution $X \in C^0([0,\tau],\mx)$ such that $\| X(t) \|_\infty \leq  \rho (M_0 + 1)$, where $M_0 = \sup_{t,s \in [0,T]} \| U(s,t) \|$.
\end{proposition}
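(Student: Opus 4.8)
The plan is to realise the mild solution as a fixed point of the Duhamel map and to invoke the Banach fixed-point theorem on the complete metric space $B_{\tau,\rho(M_0+1)}$. Let $U(t,s)$ be the evolution family furnished by \Cref{prop:low_regularity_non_auto_well_posedness}, so that a continuous $X$ is a mild solution of \Cref{eq:evolution_nonlinear_2} precisely when
\begin{equation*}
X(t) = U(t,0)X_0 + \int_0^t U(t,s)\,\mathcal{F}(s,X(s))\,ds =: \Phi(X)(t).
\end{equation*}
Since $s\mapsto\mathcal{F}(s,X(s))$ is continuous on $[0,\tau]$ (by continuity of $t\mapsto\mathcal{F}(t,X)$ together with \Cref{lemma:nonlinear_lipschitz}) and $U$ is strongly continuous, $\Phi(X)\in\mathcal{C}^0([0,\tau],\mx)$, so $\Phi$ is a well-defined self-map of $\mathcal{C}^0([0,\tau],\mx)$.

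First I would verify that $\Phi$ maps the ball $B_{\tau,R}$, with $R:=\rho(M_0+1)$, into itself for $\tau$ small. Writing $C_0:=\sup_{s\in[0,T]}\|\mathcal{F}(s,0)\|_\mx$, which is finite by the growth bound in \Cref{hyp:nonlinearities_1}, the pointwise Lipschitz estimate extracted in the proof of \Cref{lemma:nonlinear_lipschitz} gives $\|\mathcal{F}(s,X(s))\|_\mx\le L_{\tau,R}\,R+C_0$ for $X\in B_{\tau,R}$. Using $\|U(t,s)\|\le M_0$ we obtain
\begin{equation*}
\|\Phi(X)(t)\|_\mx \le M_0\rho + M_0\,\tau\,(L_{\tau,R}R+C_0).
\end{equation*}
Because $L_{\tau,R}$ is non-decreasing in $\tau$ and hence bounded on $[0,T]$, choosing $\tau$ small enough that $M_0\tau(L_{\tau,R}R+C_0)\le\rho$ gives $\|\Phi(X)\|_\infty\le M_0\rho+\rho=R$, so $\Phi(B_{\tau,R})\subseteq B_{\tau,R}$ and the prescribed bound $\|X\|_\infty\le\rho(M_0+1)$ is encoded in the ball.

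Next I would show $\Phi$ is a contraction on $B_{\tau,R}$. For $X,Y\in B_{\tau,R}$, \Cref{lemma:nonlinear_lipschitz} and $\|U(t,s)\|\le M_0$ yield
\begin{equation*}
\|\Phi(X)(t)-\Phi(Y)(t)\|_\mx \le M_0\int_0^t L_{\tau,R}\|X(s)-Y(s)\|_\mx\,ds \le M_0 L_{\tau,R}\,\tau\,\|X-Y\|_\infty.
\end{equation*}
Shrinking $\tau$ so that $M_0 L_{\tau,R}\tau<1$ makes $\Phi$ a contraction; taking the final $\tau=\tau(\rho)$ to meet both smallness requirements, the Banach fixed-point theorem produces a unique $X\in B_{\tau,R}$ with $\Phi(X)=X$, which is the sought mild solution. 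To upgrade uniqueness in the ball to uniqueness in all of $\mathcal{C}^0([0,\tau],\mx)$, I would note that any two mild solutions are bounded on the compact interval, hence lie in a common ball on which $\mathcal{F}$ is Lipschitz by \Cref{lemma:nonlinear_lipschitz}, and Gr\"onwall's inequality applied to $\|X(t)-Y(t)\|_\mx\le M_0\int_0^t L\|X(s)-Y(s)\|_\mx\,ds$ forces $X\equiv Y$.

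The main obstacle is not the fixed-point machinery but arranging the two smallness conditions to be compatible with the prescribed radius $\rho(M_0+1)$ uniformly over all admissible initial data. This is exactly where the subcritical growth of \Cref{hyp:nonlinearities_2} enters: through \Cref{lemma:nonlinear_lipschitz} it guarantees that $L_{\tau,R}$ and $C_0$ remain finite and controllable, so that a single $\tau=\tau(\rho)$ works for every $X_0$ with $\|X_0\|_\mx\le\rho$, rather than a threshold degenerating with the data.
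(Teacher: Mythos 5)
Your proposal is correct and follows the same route as the paper: recast the mild solution as a fixed point of the Duhamel map $\Phi_{X_0}$ on a ball $B_{\tau,R}$ in $\mathcal{C}^0([0,\tau],\mx)$, use \Cref{lemma:nonlinear_lipschitz} to make $\Phi_{X_0}$ a contraction for $\tau$ small depending only on $\rho$, and read off the bound $\|X\|_\infty\le\rho(M_0+1)$. Three small differences are worth noting. First, you verify explicitly that $\Phi_{X_0}$ maps $B_{\tau,R}$ into itself before invoking Banach's theorem; the paper establishes the contraction estimate and only derives the a priori bound on the fixed point afterwards, so your ordering is the cleaner one. Second, you do not assume $\mathcal{F}(s,0)=0$ but carry the constant $C_0=\sup_s\|\mathcal{F}(s,0)\|_\mx$; the paper's proof uses $\mathcal{F}(s,0)=0$, which holds for power-type nonlinearities but is not implied by \Cref{hyp:nonlinearities_1}, so your version is slightly more general (at the price of a marginally different smallness condition on $\tau$, which still depends only on $\rho$). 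Third, your final Gr\"onwall argument upgrades uniqueness from the ball to all of $\mathcal{C}^0([0,\tau],\mx)$; the paper deliberately stops at conditional uniqueness here and obtains unconditional uniqueness later, in \Cref{lemma:uniqueness_unconditional}, via a gluing-and-shifting argument. Your Gr\"onwall route is a valid and more direct alternative for that step, though it proves more than the proposition as stated requires.
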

\begin{proof}
Given $\rho > 0$ consider an initial datum $X_0$ such that $\|X_0\|_\mx \leq \rho$. Set $r = 1 + M_0 \rho$ and let $0 < \tau \leq T$. Consider the map
\begin{equation}\label{eq:fixed_point_map}
\Phi_{X_0}(X)(t) = U(t,0) X_0 + \int_0^t U(t,s)F(s,X(s))ds
\end{equation}
where $X \in B_{\tau,r}$. We claim that $\Phi$ is a continuous function in $t$ by the continuity of $F$ and the dominated convergence theorem. The continuity of $F$ is a consequence of \Cref{hyp:nonlinearities_1,hyp:nonlinearities_2}. Then we need to prove that for every $0 \leq t \leq T$ 
$$\lim_{t \rightarrow t_0} \Phi_{X_0}(X)(t) = \Phi_{X_0}(X)(t_0)$$
As for the first term in \Cref{eq:fixed_point_map} we note that by definition of evolution family $\lim_{t \rightarrow t_0} U(t,0) X_0 = U(T_0,0)X_0$. For the second term we find that
\begin{equation}
\begin{split}
\left| \int_0^t U(t,s) F(X(t)) ds \right| \leq M_0 \int_0^t \left| F(s,X(t)) \right| \leq M \tau \|F\|_\infty
\end{split}
\end{equation}
and our claim is proved. Therefore we conclude that the map $\Phi_{X_0}(X)$ is in $\mathcal{C}^0([0,\tau],\mx)$ and that $X \in B_{\tau,\rho}$ is a solution of \Cref{eq:evolution_nonlinear_2} with initial datum $X_0$ if and only if $X$ is a fixed point of $\Phi_{X_0}$. For every $X,Y \in B_{\tau,\rho}$ we have
\begin{equation}
\begin{split}
\left\|\Phi_{X_0}(X)(t) - \Phi_{X_0}(Y)(t) \right\|_\mx & \leq M_0 \int_0^t \left\| \mathcal{F}(s,X(s))- \mathcal{F}(s,Y(s)) \right\|_\mx ds \\
& \leq M_0 \tau \left\| \mathcal{F}(s,X(s))- \mathcal{F}(s,Y(s)) \right\|_\mx
\end{split}
\end{equation}
from which, making use of \Cref{lemma:nonlinear_lipschitz} we find
\begin{equation}\label{eq:evolution_lipschitz}
\left\|\Phi_{X_0}(X)(t) - \Phi_{X_0}(Y)(t) \right\|_\infty \leq \tau M_0 L_{\tau,\rho} \left\| X-Y \right\|_\infty
\end{equation}
Since $L_{\tau,\rho}$ is non-decreasing in $\tau$, for $\tau$ small enough we have $\tau M_0 L_{\tau,\rho} < 1$ and the map $\Phi_{X_0}$ is a contraction on $B_{\tau,\rho}$. Thus by Banach's theorem it admits a unique fixed point $X \in B_{\tau,\rho}$, which is the only solution to \Cref{eq:evolution_nonlinear_2} with initial datum $X_0$.  For $t \in [0,\tau]$ we have the following bound on the $\mx$ norm of $X(t)$:
\begin{equation}\label{eq:nonlinear_solution_bound}
\begin{split}
\|X(t)\|_\mx = \| \Phi_{X_0}(X)(t) \|_\mx \leq M_0 \| X_0 \|_\mx +  M_0 \int_0^t \|\mathcal{F}(s,X(s)) - \mathcal{F}(s,0) \|_\mx ds\\ \leq M_0 \rho + M_0 \tau L_{\tau,\rho} \rho
\end{split}
\end{equation}
where we used that $\mathcal{F}(s,0)=0$ and \Cref{lemma:nonlinear_lipschitz}.
Choose now 
\begin{equation}\label{eq:existence_time}
\tau = \min \left\{T, \dfrac{1}{2 M_0 L_{T,\rho}} \right\}
\end{equation}
 yielding a Lipschitz constant of $1/2$. Then we can bound the norm of $X(t)$ independently of $\tau$ as
\begin{equation*}
\| X(t) \|_\mx \leq \rho \left( 1 + M_0 \right)
\end{equation*}
from which the thesis follows.
\end{proof}
\begin{remark}\label{remark:shifted_evolution}
Notice that the previous result keeps to hold true also using a shifted evolution family $U(t+\tau,s+\tau)$, with $s\leq t$ and $t \in [0,T-\tau]$, namely substituting the map defined in \Cref{eq:fixed_point_map} with
\begin{equation*}
\Phi_{X_0}(X)(t) = U(t+\tau,\tau) X_0 + \int_0^t U(t+\tau,s+\tau)F(s,X(s))ds.
\end{equation*}
This formulation is useful when the initial datum is assigned at time $t=\tau$ instead of $t=0$.
\end{remark}

This local well-posedness proposition is a conditional uniqueness result among functions belonging to a certain ball. To derive unconditional uniqueness, we need to be able to glue and shift solutions in time.
\begin{lemma}\label{lemma:glue_and_shift}
Consider $0 < \tau_1 < \tau_2 < T$. Let $X \in \mathcal{C}^0([0,\tau_1],\mx)$ be a mild solution in $[0,\tau_1]$ of the problem in \Cref{eq:evolution_nonlinear_2} with initial datum $X_0$. Then:
\begin{itemize}
\item[a)] If $Y \in \mathcal{C}^0([\tau_1,\tau_2],\mx)$ is a mild solution with initial datum $Y(\tau_1)=X(\tau_2)$ then the function $Z \in \mathcal{C}^0([0,\tau_2],\mx)$ given by
\begin{equation*}
Z(t) =
\begin{cases}
X(t) \quad \textit{if} \quad 0 \leq  t \leq \tau_1\\
Y(t) \quad \textit{if} \quad \tau_1 < t \leq \tau_2
\end{cases}
\end{equation*}
is a mild solution of the problem in $[0,\tau_2]$ with initial datum $X_0$. 
\item[b)] Let $\rho \in (0,\tau_1)$. Then the shifted function $$X(\cdot + \rho) \in \mathcal{C}^0([0,\tau_1-\rho],\mx)$$ is a mild solution of the evolution problem with initial datum $X(\rho)$, built with the shifted evolution family $U(\cdot+\rho,\cdot+\rho)$ as per \Cref{remark:shifted_evolution}.
\end{itemize}
\end{lemma}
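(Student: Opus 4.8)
The plan is to verify, in each case, the Duhamel identity characterising a mild solution: a continuous function $W$ on $[a,b]$ is a mild solution with datum $W(a)$ precisely when $W(t)=U(t,a)W(a)+\int_a^t U(t,s)\mathcal{F}(s,W(s))\,ds$ for all $t\in[a,b]$. The only structural ingredients are the cocycle identity $U(t,r)U(r,s)=U(t,s)$ and the identity $U(s,s)=\mathbb{I}$ from the definition of the evolution family, the strong continuity of $(t,s)\mapsto U(t,s)$, and the fact that every $U(t,r)$ is a bounded linear operator and thus commutes with the Bochner integral. Throughout I read the matching hypothesis of (a) as $Y(\tau_1)=X(\tau_1)$, which is exactly what renders the glued function continuous at $\tau_1$.

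For part (a), continuity of $Z$ holds on $[0,\tau_1)$ and on $(\tau_1,\tau_2]$ by hypothesis, and at $t=\tau_1$ the two branches agree since $Y(\tau_1)=X(\tau_1)$, so $Z\in\mathcal{C}^0([0,\tau_2],\mx)$. For $t\le\tau_1$ the Duhamel identity for $Z$ is that for $X$ and holds by assumption. For $t\in(\tau_1,\tau_2]$ I would start from the mild-solution identity for $Y$ on $[\tau_1,t]$, insert $Y(\tau_1)=X(\tau_1)$ written via the mild-solution identity for $X$ on $[0,\tau_1]$, and then collapse the result with the cocycle property: $U(t,\tau_1)U(\tau_1,0)=U(t,0)$ treats the initial term, while pulling $U(t,\tau_1)$ inside the first integral and using $U(t,\tau_1)U(\tau_1,s)=U(t,s)$ merges the integrals over $[0,\tau_1]$ and $[\tau_1,t]$ into a single integral over $[0,t]$. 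Since $Z=X$ on $[0,\tau_1]$ and $Z=Y$ on $(\tau_1,t]$, the integrand is $\mathcal{F}(s,Z(s))$, and the Duhamel identity for $Z$ with datum $X_0$ follows.

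For part (b), I would first rewrite $X$ as a mild solution restarted at $\rho$: splitting $\int_0^t=\int_0^\rho+\int_\rho^t$ in the Duhamel identity for $X$ and applying the cocycle property as above gives $X(t)=U(t,\rho)X(\rho)+\int_\rho^t U(t,s)\mathcal{F}(s,X(s))\,ds$ for $\rho\le t\le\tau_1$. The change of variables $t\mapsto t+\rho$, $s\mapsto s+\rho$ then produces, for $t\in[0,\tau_1-\rho]$, the identity $X(t+\rho)=U(t+\rho,\rho)X(\rho)+\int_0^t U(t+\rho,s+\rho)\mathcal{F}(s+\rho,X(s+\rho))\,ds$, which is the Duhamel identity for $X(\cdot+\rho)$ relative to the shifted evolution family $U(\cdot+\rho,\cdot+\rho)$ of \Cref{remark:shifted_evolution}, with datum $X(\rho)$.

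I expect the only delicate points to be the interchange of the bounded operator $U(t,\tau_1)$ (respectively $U(t,\rho)$) with the Bochner integral and the bookkeeping of the time argument of $\mathcal{F}$ under the shift in (b); the former is justified by boundedness of $U$ together with the continuity of $s\mapsto\mathcal{F}(s,\cdot)$ recorded before \Cref{lemma:nonlinear_lipschitz}, and the latter is the elementary change of variables above. Everything else reduces to repeated application of the cocycle identity.
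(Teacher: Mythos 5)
Your proposal is correct and follows essentially the same route as the paper's own proof: for (a) insert the Duhamel representation of $X(\tau_1)$ into that of $Y$ and collapse via the cocycle identity, and for (b) split the integral at $\rho$ and change variables to obtain the shifted Duhamel identity. Your reading of the matching condition as $Y(\tau_1)=X(\tau_1)$ (rather than the $X(\tau_2)$ printed in the statement) is the correct one and is exactly what the paper's proof uses.
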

\begin{proof}
a) By construction $Z \in \mathcal{C}^1([0,\tau_2],\mx)$ and solves the evolution problem on $[0,\tau_1]$. We need to show that $Z$ is also a solution on $[\tau_1,\tau_2]$.
We know that
\begin{equation*}
Z(t) = Y(t-\tau_1) = U(t,\tau_1)X(\tau_1)+\int_{\tau_1}^t U(t,s)F(Y(s))ds
\end{equation*}
Writing on $[0,\tau_1]$ the function $Z(t)$ is nothing but $X(t)$, the previous expression becomes
\begin{equation*}
U(t,\tau_1)\left[ U(\tau_1,0)X(0) + \int_0^{\tau_1} U(\tau_1,s) F(X(s)) ds \right] +\int_{\tau_1}^t U(t,s)F(Y(s))ds
\end{equation*}
from which we read that
\begin{equation}
Z(t)=U(t,0)Z(0)+\int_0^t U(t,s) F(Z(t))
\end{equation}
\linebreak
b) Let $Y=X(t+\rho)$ with $t \in [0,\tau_1-\rho]$.
Then by definition of evolution family we have
\begin{equation}\label{eq:shift_first_step}
\begin{split}
Y(t) & =   X(t+\rho) = U(t+\rho,0)X(0)+ \int_0^{t+\rho} U(t+\rho,s)F(X(s)) ds \\
& = U(t+\rho,\rho)U(\rho,0)X(0)+ \int_0^{t+\rho} U(t+\rho,s)F(X(s)) ds.
\end{split}
\end{equation}
Writing the last integral as
\begin{equation*}
\int_0^{\rho} U(t+\rho,\rho)U(\rho,s)F(X(s)) ds + \int_\rho^{t+\rho} U(t+\rho,s)F(X(s)) ds
\end{equation*}
the last expression in \Cref{eq:shift_first_step} becomes
\begin{equation*}
U(t+\rho,\rho) \left[ U(\rho,0)X(0) + \int_0^{\rho} U(\rho,s)F(X(s)) ds  \right] + \int_\rho^{t+\rho} U(t+\rho,s)F(X(s)) ds.
\end{equation*}
Since the term in the square brackets is nothing but $X(\rho)$, we finally get that
\begin{equation*}
\begin{split}
Y(t) & = U(t+\rho,\rho) X(\rho)+\int_\rho^{t+\rho} U(t+\rho,s)F(X(s)) ds\\
& = U(t+\rho,\rho) X(\rho) + \int_0^{t} U(t+\rho,s+\rho) F(Y(s)) ds
\end{split}
\end{equation*}
Now we can prove the following unconditional uniqueness result for all the mild solutions to \Cref{eq:evolution_nonlinear_2}.
\begin{lemma}\label{lemma:uniqueness_unconditional}
Let $X$ and $Y$ be mild solutions of \Cref{eq:evolution_nonlinear_2} with initial datum $X(0)=Y(0)$ on $I_X$ and $I_Y$ respectively, $I_X,I_Y \subseteq [0,T]$. Then $X \equiv Y$ on $I_X \cap I_Y$.
\end{lemma}
We prove the statement by contradiction. Let $I = I_X \cap I_Y$. Since $X(0)=Y(0)$, then we have that 
$$\tau = \sup\{ \kappa \in I \ | \ \forall t \in [0,\kappa] \quad X(t)=Y(t) \} $$
is in $[0,\sup I]$. Suppose that $X \neq Y$ on the interval $I$. Then by continuity we find that $\tau < \sup I$ and $X(\tau)=Y(\tau)$. Therefore there is a sequence of times $\{ t_n \} \subseteq I$ with $t_n \rightarrow \tau^+$ for $n \rightarrow + \infty$ such that for each $n$ we have $X(t_n) \neq Y(t_n)$. Now let us consider $\omega^\star>0$ such that $\tau+\omega \in I$. If $\omega \in [0,\omega^\star]$, \Cref{lemma:glue_and_shift} shows that the shifted solutions $\hat{X}(t)=X(t+\tau)$ and $\hat{Y}(t)=Y(t+\tau)$ are mild solutions on $[0,\omega^\star]$ with initial datum $X(\tau)=Y(\tau)$. By \Cref{prop:nonlinear_well_posedness_conditional} and \Cref{remark:shifted_evolution}, for small times we have $\hat{X}(t) = \hat{Y}(t)$, contradicting the inequality $X(t_n) \neq Y(t_n)$ for $n$ sufficiently large.
\end{proof}
At last we prove the uniqueness of the maximal solution and the continuous dependence on the initial data. Using the lemmas above, the proof is similar to the one given for autonomous equations, see e.g. \cite[Theorem 1.11]{Sch22nl}, therefore we limit ourselves to summarize the main steps.
\begin{proposition}\label{prop:nonlinear_well_posedness_unconditional}
Assume Hypotheses \eqref{hypothesis:smooth_metric}, \eqref{hyp:nonlinearities_1} and \eqref{hyp:nonlinearities_2}. Then the following assertions hold true:
\begin{itemize}
\item[a)] For each $u_0 \in \mx$ there is a maximal mild solution $X(t) \in \mathcal{C}^0(I,\mx)$ with $I$ either $[0,T]$ or $[0,t^+(X_0))$, where $t^+(X_0) \in [\tau,T]$ and $\tau$ is as per \Cref{eq:existence_time}.
\item[b)] If $t^+(X_0) < T$, then $\lim_{t\rightarrow t^+(X_0)^-} \|X(t) \|_\mx =+\infty$.
\item[c)] For any $t^\star \in (0,t^+(X_0))$ there is a radius $\rho=\rho(X_0,t^\star)$ such that the map
$$ \overline{B(X_0,\rho)} \rightarrow \mathcal{C}^0([0,b],\mx), \quad X_0 \mapsto X(t)$$
is Lipschitz continuous.
\end{itemize}
\end{proposition}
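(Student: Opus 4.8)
The plan is to assemble the maximal solution by patching together the local solutions furnished by \Cref{prop:nonlinear_well_posedness_conditional}, using the gluing and shifting operations of \Cref{lemma:glue_and_shift} together with the unconditional uniqueness of \Cref{lemma:uniqueness_unconditional}. For part (a) I would set
$$t^+(X_0) = \sup\left\{ b \in (0,T] \ \middle| \ \textit{there is a mild solution of \Cref{eq:evolution_nonlinear_2} on } [0,b] \textit{ with datum } X_0 \right\}.$$
Applying \Cref{prop:nonlinear_well_posedness_conditional} with $\rho = \|X_0\|_\mx$ shows this set is nonempty and that $t^+(X_0) \geq \tau$. By \Cref{lemma:uniqueness_unconditional} any two mild solutions agree on the intersection of their domains, so the union of all such solutions defines a single mild solution $X$ on $[0,t^+(X_0))$, which equals $[0,T]$ precisely when the supremum is attained; this $X$ is by construction maximal.

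For part (b) I would argue by contradiction. Suppose $t^+ := t^+(X_0) < T$ but the blow-up fails; negating $\lim_{t\to t^{+-}}\|X(t)\|_\mx = +\infty$ yields an $R > 0$ and a sequence $t_n \uparrow t^+$ with $\|X(t_n)\|_\mx \leq R$. Applying \Cref{prop:nonlinear_well_posedness_conditional} in the shifted form of \Cref{remark:shifted_evolution} with $\rho = R$ produces a single existence time $\tau(R) > 0$ valid for every datum of norm at most $R$. Choosing $n$ large enough that $t^+ - t_n < \tau(R)$, the solution started from $X(t_n)$ exists on $[t_n, t_n + \tau(R)]$ with $t_n + \tau(R) > t^+$; by uniqueness it coincides with $X$ on $[t_n, t^+)$, and gluing through \Cref{lemma:glue_and_shift} yields a mild solution on an interval strictly larger than $[0,t^+)$, contradicting the definition of $t^+$. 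Hence $\|X(t)\|_\mx \to +\infty$ as $t \to t^{+-}$.

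For part (c) I would fix $t^\star \in (0,t^+(X_0))$ and put $R = \sup_{t \in [0,t^\star]} \|X(t)\|_\mx$, which is finite by continuity on the compact interval. Given a second datum $Y_0$, subtracting the Duhamel representations \Cref{eq:fixed_point_map} for $X$ and $Y$ and invoking the Lipschitz estimate of \Cref{lemma:nonlinear_lipschitz} on the ball of radius $2R$ gives, as long as both solutions stay in that ball,
$$\|X(t) - Y(t)\|_\mx \leq M_0 \|X_0 - Y_0\|_\mx + M_0 L_{t^\star, 2R} \int_0^t \|X(s) - Y(s)\|_\mx\, ds.$$
Gr\"onwall's inequality then gives $\|X(t) - Y(t)\|_\mx \leq M_0\, e^{M_0 L_{t^\star, 2R}\, t}\,\|X_0 - Y_0\|_\mx$, and a continuity (bootstrap) argument shows that for $Y_0$ in a sufficiently small ball $\overline{B(X_0,\rho)}$ the solution $Y$ does remain within radius $2R$ up to some $b \leq t^\star$, so the bound holds on $[0,b]$ and yields the claimed Lipschitz continuity of the solution map.

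The step I expect to be the main obstacle is the blow-up alternative (b): the delicate point is to extract, from the mere failure of blow-up, a uniform-in-$n$ existence time $\tau(R)$ and then to verify that the glued function is a \emph{genuine} mild solution on a strictly larger interval, so that maximality is actually contradicted. This is exactly where the datum-independence of $\tau(\rho)$ in \Cref{prop:nonlinear_well_posedness_conditional} and the uniform bound $M_0$ on the evolution family are indispensable.
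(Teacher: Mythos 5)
Your proposal is correct and follows essentially the same route as the paper: the maximal solution is assembled by gluing local solutions from \Cref{prop:nonlinear_well_posedness_conditional} via \Cref{lemma:glue_and_shift} and \Cref{lemma:uniqueness_unconditional}, the blow-up alternative is proved by contradiction using the fact that the local existence time $\tau(\rho)$ depends only on the norm bound $\rho$, and continuous dependence comes from the Duhamel/Lipschitz estimates. The only cosmetic difference is that you close the continuous-dependence estimate with Gr\"onwall's inequality, whereas the paper simply invokes the contraction bounds \Cref{eq:nonlinear_solution_bound,eq:evolution_lipschitz} iterated over subintervals of length $\tau$; both are standard and equivalent here.
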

\begin{proof}
First we prove claim b) by contradiction. Let $t^+ < T$ and assume that there is a sequence $\{ t_n \}_{n \in \mathbb{N}} \subset [0,t^+(X_0))$ such that $t_n \rightarrow t^+(X_0)$ for $n \rightarrow +\infty$ with $\sup_{n \in \mathbb{N}}\| X(t_n) \|_\mx < \infty$. Choose $m \in \mathbb{N}$ such that $t^+(X_0) < t_m + \tau < T$, with $\tau$ as per \Cref{eq:existence_time}. Then by \Cref{prop:nonlinear_well_posedness_conditional} and \Cref{remark:shifted_evolution} we obtain a mild solution $\hat{X}(t)$ on $[t_m, t_m +\tau]$ with initial datum $X(t_m)$. Gluing this solution together with the starting solution $X(t)$, by \Cref{lemma:glue_and_shift} we obtain a solution on $[0,t_m + \tau]$, in contrast with the fact that $[0,t^+(X_0))$ is the maximal existence interval. As for claim a), the maximal solution is built in the usual way, shifting and gluing together local solutions given by \Cref{prop:nonlinear_well_posedness_conditional}. Applying \Cref{lemma:glue_and_shift,lemma:uniqueness_unconditional} then yields that the solution is uniquely defined on the the maximal existence interval $I$, which by assertion b) is either $[0,T]$ or $[0,t^+(X_0))$. 
Finally, statement c) follows using the bounds in \Cref{eq:nonlinear_solution_bound,eq:evolution_lipschitz} in $[0,t^\star]$.
\end{proof}

\section*{acknowledgements}
We are grateful to Claudio Dappiaggi for the useful discussions. This work is supported by a postdoctoral fellowship of the Universit\`a degli Studi di Pavia, which is gratefully acknowledged.

\newcommand{\etalchar}[1]{$^{#1}$}

\end{document}